\documentclass{amsart}
\usepackage{graphicx}
\usepackage{amsmath}
\usepackage{amsfonts}
\usepackage{amssymb}
\usepackage{mathtools}
\usepackage{tikz-cd} 	
\usepackage{parskip}
\usepackage{amsthm}
\usepackage{appendix}
\usepackage{tikz}
\usetikzlibrary{calc}

\DeclareMathOperator{\ima}{im}
\DeclareMathOperator{\coker}{coker}
\DeclareMathOperator{\id}{id}

\DeclareMathOperator{\Hilb}{Hilb}
\DeclareMathOperator{\Rays}{Rays}

\DeclareMathOperator{\Spec}{Spec}

\DeclareMathOperator{\cO}{\mathcal{O}}

\DeclareMathOperator{\RR}{\mathbf{R}}
\DeclareMathOperator{\ZZ}{\mathbf{Z}}

\DeclareMathOperator{\QQ}{\mathbf{Q}}

\DeclareMathOperator{\dual}{\!^{\vee}}

\DeclareMathOperator{\dani}{\widetilde{\Omega}}

\DeclareMathOperator{\tors}{tors}

\DeclareMathOperator{\supp}{supp}
\DeclareMathOperator{\rank}{rank}

\DeclareMathOperator{\cdh}{cdh}
\DeclareMathOperator{\zar}{zar}

\DeclareMathOperator{\textif}{ if }
\newcommand{\textst}{\text{ such that }}

\newtheorem{theorem}{Theorem}[section]

\newtheorem{lemma}[theorem]{Lemma}

\theoremstyle{definition}
\newtheorem{definition}[theorem]{Definition}

\newtheorem*{remark}{Remark}

\newtheorem{claim}[theorem]{Claim}

\RequirePackage{tikz-cd}
\RequirePackage{amssymb}
\usetikzlibrary{calc}
\usetikzlibrary{decorations.pathmorphing}

\tikzset{curve/.style={settings={#1},to path={(\tikztostart)
    .. controls ($(\tikztostart)!\pv{pos}!(\tikztotarget)!\pv{height}!270:(\tikztotarget)$)
    and ($(\tikztostart)!1-\pv{pos}!(\tikztotarget)!\pv{height}!270:(\tikztotarget)$)
    .. (\tikztotarget)\tikztonodes}},
    settings/.code={\tikzset{quiver/.cd,#1}
        \def\pv##1{\pgfkeysvalueof{/tikz/quiver/##1}}},
    quiver/.cd,pos/.initial=0.35,height/.initial=0}

\tikzset{tail reversed/.code={\pgfsetarrowsstart{tikzcd to}}}
\tikzset{2tail/.code={\pgfsetarrowsstart{Implies[reversed]}}}
\tikzset{2tail reversed/.code={\pgfsetarrowsstart{Implies}}}
\tikzset{no body/.style={/tikz/dash pattern=on 0 off 1mm}}

\title{$K_2$-regularity is Equivalent to Smoothness for Toric Varieties}
\author{Christian Haesemeyer}
\address{School of Mathematics and Statistics, University of Melbourne,
VIC 3010, Australia}
\email{christian.haesemeyer@unimelb.edu.au}

\author{George Henderson-Walshe}
\address{Dipartimento di Matematica, Università di Bologna, Piazza di Porta San Donato 5 - 40126, Bologna, Italy}
\email{george.henderson@studio.unibo.it}
\keywords{algebraic $K$-theory, toric varieties, singularities}
\date{\today}

\begin{document}

\begin{abstract}
    Recent work by W.\,Shen \cite{Shen} shows that $K_2$-regular affine local complete intersections of any dimension in characteristic zero are smooth. In this note, we show that the same holds for affine toric varieties. 
\end{abstract}

\maketitle

\section{Introduction}

Over the past twenty years, there has been major progress on long-standing questions regarding the structure of algebraic $K$-theory of (algebraic) singularities. It has been known since the early days of the subject that the $K$-theory of singularities will not necessarily be algebraically homotopy invariant: for a singular scheme $X$, and an integer $n$, the natural map $K_n(X)\to K_n(X\times\mathbb{A}^1)$ will not typically be an isomorphism. Bass introduced the terminology that $X$ is $K_n$-regular if the above map is an isomorphism for all $m\leq n$.

In \cite{Vorst1}, Ton Vorst analysed the case of one-dimensional singularities essentially completely: if $X$ is an affine curve of finite type over a field, then $X$ is $K_2$-regular if, and only if, it is regular. Based on this result and its proof, Vorst conjectured that for a $d$-dimensional algebra $R$ of finite type over a field $F$, $K_{d+1}$-regularity implies regularity. This conjecture was proved in \cite{chw} when $F$ has characteristic zero, and for a slightly weaker notion of dimension (that coincides with Krull dimension for algebras of finite type over perfect fields), in \cite{KST21} in positive characteristic. 

In characteristic zero, the proof proceeds via trace methods: the Jones--Goodwillie Chern character can be used to calculate $NK_n(R) = \mathrm{coker}\left(K_n(R)\to K_n(R\times\mathbb{A}^1) \right)$ in terms of Hochschild homology and its sheafification in the $cdh$-topology (see \cite{Bass_NK_groups}), and this in turn can be used to show that $K_{d+1}$-regularity of $R$ implies vanishing of the $(d+1)$-st K\"ahler differentials, which means $R$ is a smooth $F$-algebra by the Jacobi criterion. 

Somewhat surprisingly, there are currently no examples in the literature demonstrating that Vorst's Conjecture is sharp in any dimension above one. Indeed, recent work by Wanchun Shen (\cite{Shen}) using previous results on the Hodge structures of local cohomology has shown that, when $R$ is local complete intersection, $K_2$-regularity suffices to imply regularity. In this note, we show that the same is true when $R$ has toric singularities: 

\begin{theorem}\label{thm:K1-intro}
    Let $X = U(\sigma, N)$ be an affine toric variety over a field of characteristic zero. If $\sigma$ is simplicial, then $X$ is smooth if, and only if, $X$ is $K_1$-regular. 
\end{theorem}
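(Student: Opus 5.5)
The plan is to reduce $K_1$-regularity to a statement about Kähler differentials using the trace-method description of $NK$ recalled in the introduction, and then to check that statement directly on the monoid algebra $R = k[\sigma^\vee\cap M]$.

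First I split off the torus factor. Write $X \cong U(\sigma',N')\times \mathbb{G}_m^r$ with $\sigma'$ full-dimensional in $N'_\RR$. Smoothness and $K_1$-regularity are both insensitive to this decomposition: for $K_1$-regularity, use the fundamental theorem, under which $NK_1(R[u,u^{-1}])$ is a sum of $NK_1(R)$ and $NK_0(R)$ terms. So I may assume $\sigma$ is full-dimensional, and hence that $\sigma^\vee$ is strongly convex. Simpliciality passes to $\sigma^\vee$.

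The key input is the calculation of $NK_1(R)$ in characteristic zero from \cite{chw} (see also \cite{Bass_NK_groups}). There $NK_*(R)$ is computed from the fibre of $HH(R\otimes k[t]/k) \to \mathbb{H}_{\cdh}(HH)$, which Goodwillie's theorem converts into cyclic homology. Through Hodge decompositions, the lowest-weight contribution to $NK_1(R)$ is a direct summand of the form
\[
\coker\bigl(\Omega^1_R \to H^0_{\cdh}(X,\Omega^1)\bigr)\otimes_k tk[t].
\]
This is the degree-shifted version of the $\Omega^{d+1}$ statement used for Vorst's conjecture. So $NK_1(R)=0$ forces $\Omega^1_R \to H^0_{\cdh}(X,\Omega^1)$ to be surjective. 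I expect this to be the main obstacle: I have to extract exactly this summand, checking that the relevant weight pieces split off, rather than merely a filtration quotient. I would try first to imitate the argument of \cite{chw}, where the $\Omega^{d+1}$ term is isolated, keeping track of the weight-$1$ part.

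Next I identify $H^0_{\cdh}(X,\Omega^1)$ for a normal toric variety. It is $\Gamma(\tilde X,\Omega^1_{\tilde X})$ for a toric resolution $\tilde X\to X$, and this equals Danilov's module $\dani^1_R$ of reflexive (Zariski) differentials. This module is $M$-graded: its degree-$m$ piece is $x^m\otimes V_m$, where $V_m\subseteq M\otimes k$ is the span of the smallest face of $\sigma^\vee$ containing $m$.

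Finally I compare degrees. Let $h$ be an element of the Hilbert basis $\Hilb(\sigma^\vee\cap M)$. Because $h$ is indecomposable, $(\Omega^1_R)_h$ is spanned by $dx^h$, so its image in $(\dani^1_R)_h$ is $x^h\otimes k\,h$, which is at most one-dimensional. Surjectivity therefore forces $\dim V_h = 1$, i.e.\ $h$ lies on a ray of $\sigma^\vee$, so $h$ is a primitive ray generator. Hence the Hilbert basis consists exactly of the ray generators of $\sigma^\vee$. Since $\sigma^\vee$ is simplicial and full-dimensional, there are $d$ of them and they are linearly independent. The monoid $\sigma^\vee\cap M$ is then free, so $R$ is a polynomial ring and $X$ is smooth. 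The converse is standard: smooth implies $K_n$-regular for all $n$.
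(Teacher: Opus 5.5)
Your proposal is correct and is essentially the paper's proof. You split off the torus factor, then use \cite[Theorem 5.5]{Bass_NK_groups} (Theorem \ref{thm:NK_groups}) to see $\coker\bigl(\Omega^1_R\to H^0_{\cdh}(X,\Omega^1)\bigr)\otimes t\QQ[t]$ as a direct summand of $NK_1(R)$. The step you flag as the main obstacle is therefore available off the shelf, provided you add the base-change remark in Lemma \ref{lemma: ker coker implies not K regular}, since that theorem is stated for differentials over $\QQ$ rather than $k$. You then identify the target with $\dani^1_X$ via \cite{chww-toric} (Lemmas \ref{lemma: danilov cdh-sheafification} and \ref{lemma: sheafification is beta}) and compare dimensions in the degree of a Hilbert basis element not on a ray, exactly as in Lemma \ref{lemma: coker1 = H less R}.

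The only cosmetic differences are two. You remove the torus factor by a $K$-theoretic retraction rather than by Lemma \ref{lemma: product}. You also prove the needed direction of Lemma \ref{lem:simplicial_iff_hilbert} inline: a Hilbert basis consisting of $d$ independent ray generators forces a free monoid.
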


\begin{theorem}\label{thm:K2-intro}
    Let $X = U(\sigma, N)$ be an affine toric variety over a field of characteristic zero. If $\sigma$ is not simplicial, then $X$ is not $K_2$-regular. 
\end{theorem}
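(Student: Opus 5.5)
We plan to show that $NK_2(R)\neq 0$ for $R=F[\sigma^\vee\cap M]$ by means of the characteristic-zero trace description of $NK$ (Cortiñas–Haesemeyer–Weibel). In that description $NK_{n}(R)$ is built from the fibre of $HH(R)\to \mathbb{H}_{\cdh}(R,HH)$ tensored with $tF[t]$. In low degrees the relevant term is the failure of $\Omega^1_R$ to agree with its $\cdh$-sheafification. Concretely, the part we will use is
\[
NK_2(R)\supseteq \ker\bigl(\Omega^1_R\to H^0_{\cdh}(R,\Omega^1)\bigr)\otimes tF[t],
\]
or the analogous cokernel-type term on $H^1_{\cdh}(R,\mathcal O)$. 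For toric $R$ the cdh sheafification of differentials can be computed on a toric resolution, which gives Danilov's reflexive differentials $\dani^1_R$. Also $\mathbb H_{\cdh}(R,\mathcal O)=R$, because toric singularities are rational. So the goal reduces to showing that the natural map $\Omega^1_R\to \dani^1_R$ is not injective, i.e. that $\Omega^1_R$ has nonzero torsion, whenever $\sigma$ is not simplicial. This torsion class then produces a nonzero element of $NK_2(R)$.

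The key steps, in order, are as follows.
\begin{enumerate}
\item Reduce to a convenient face. Torsion in $\Omega^1$ localizes, and $U(\tau,N)$ for a face $\tau$ appears as an open subset (after a torus factor). So we may pass to a minimal non-simplicial face. A minimal non-simplicial cone is one all of whose proper faces are simplicial, for instance a $3$-dimensional cone over a quadrilateral.
\item Use the $M$-grading. Both $\Omega^1_R$ and $\dani^1_R=\bigoplus_{m\in\sigma^\vee\cap M}\chi^m\cdot V_m$ are $M$-graded; here $V_m$ is the span of the face of $\sigma^\vee$ containing $m$ in its relative interior, as in Danilov. The degree-$m$ piece of $\Omega^1_R$ is spanned by the elements $\chi^{a}d\chi^{b}$ with $a+b=m$. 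Its image in $M\otimes F$ is spanned by the $b$ that occur in decompositions $m=a+b$ with $a,b\in\sigma^\vee\cap M$.
\item Exhibit torsion. Take $m$ in the interior of $\sigma^\vee$ and count dimensions: compare the number of independent relations among $\{\chi^a d\chi^b\}$ in degree $m$ with $\dim V_m$. The cleaner route is a direct construction. Let $u_1,\dots,u_r$ with $r>\dim\sigma$ be the primitive ray generators of the non-simplicial cone, and choose Hilbert basis elements $x_1,\dots,x_s$ of $\sigma^\vee$ that satisfy a nontrivial binomial relation $\chi^{p}=\chi^{q}$ with $p=q$ in $M$, where the multisets $p$ and $q$ have disjoint supports. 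Such a relation exists precisely because the semigroup is not the semigroup of a simplex, for example $xy=zw$ in the quadric cone. Differentiating the relation gives a relation in $\Omega^1$, and from it we form a $1$-form $\omega$, such as $y\,dx\cdot zw - \dots$ (modeled on the classical torsion element $x\,dy\wedge\ldots$ for $xy=zw$). This $\omega$ maps to zero in $\dani^1$ but is nonzero in $\Omega^1_R$. To verify nonvanishing we compute in the graded piece via the presentation $\Omega^1_R = \bigl(\bigoplus R\,dx_i\bigr)/(d\text{ of the toric ideal})$.
\item Conclude with the trace computation: the torsion of $\Omega^1_R$ injects into $NK_2(R)$, so $X$ is not $K_2$-regular.
\end{enumerate}

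The main obstacle is step 3, namely producing torsion in $\Omega^1_R$ uniformly for an arbitrary minimal non-simplicial cone rather than for the quadric cone alone. In the simplicial case, $R$ is a quotient of a polynomial ring by a finite group, and the torsion of $\Omega^1$ can vanish. That is why Theorem~\ref{thm:K1-intro} needs a different, $K_1$-level argument. To handle step 3 I would first try the degree-$m$ analysis with $m=\sum_i x_i$ over all Hilbert basis elements. In that degree I would compare $\dim(\Omega^1_R)_m$, computed combinatorially from decompositions $m=a+b$, with $\dim V_m=\rank M$. The dependency among more than $\dim\sigma$ rays of $\sigma$, equivalently among the facets of $\sigma^\vee$, should force an excess. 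If that fails, the fallback is to reduce via a suitable projection to the $3$-dimensional case over a polygon with at least $4$ sides and do that case by hand.
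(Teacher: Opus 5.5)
\textbf{There is a genuine gap: the statement your proof reduces to is false.} You reduce to showing that $\Omega^1_R$ has nonzero torsion whenever $\sigma$ is not simplicial, but this fails already for the simplest non-simplicial cone.

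For the cone over a square, $R=k[x,y,z,w]/(xy-zw)$. The conormal sequence $0\to R\xrightarrow{\,df\,}R^4\to\Omega^1_R\to 0$ is exact, so $\Omega^1_R$ has projective dimension at most $1$. Since $R$ is Cohen--Macaulay and singular only at the origin, Auslander--Buchsbaum shows that $\Omega^1_R$ has no associated primes of positive height, so it is torsion-free. You can also see this combinatorially via Altmann's criterion. The only relation is $\ell=e_x+e_y-e_z-e_w$, so $L(H_m)\neq 0$ forces $x,y,z,w\le m$, and then $\psi(\ell)=x+y\le m$; hence $T_m=0$ for every $m$. One also checks directly that $\beta^1$ is surjective here, so $\Omega^1_R\to\dani^1_R$ is an isomorphism. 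Consequently no choice of degree $m$ or of $1$-form $\omega$ in your Step 3 can work. The ``classical torsion'' of $xy=zw$ lives in higher forms, e.g.\ $dx\wedge dy\wedge dz\wedge dw\neq 0$ in $\Omega^4_R$, and that by itself only obstructs $K_5$-regularity (this is the paper's closing Remark).

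\textbf{The missing idea is the other summand of $NK_2$} in Theorem~\ref{thm:NK_groups}, namely $\coker\bigl(\Omega^2_R\to H^0_{\cdh}(\Omega^2)\bigr)\cong\coker(\beta^2)$. For the quadric cone, take $m=x+y=z+w$. The only ways to write $m$ as a sum of at least two nonzero elements of $S_\sigma$ are $x+y$ and $z+w$. So $\Omega^2_R(m)$ is spanned by $dx\wedge dy$ and $dz\wedge dw$, whereas $\dani^2_R(m)=\bigwedge^2 V_{\Gamma(m)}$ has dimension $3$.

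The paper accordingly proves that at least one of $\ker\beta^1$, $\coker\beta^1$ (which already kills $K_1$-regularity) or $\coker\beta^2$ is nonzero. In dimension $3$, if the first two vanish then $H_\sigma=R_\sigma$ and all $T_m=0$. An $m$ minimal among bar-values of relations then has $|H_m|=4$ with a single relation, which reproduces exactly the square picture above (Lemma~\ref{lemma: base case}).

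\textbf{Your Step 1 also does not reach dimension $3$ in general.} The cone over an octahedron is $4$-dimensional with all proper faces simplicial. The paper handles this with Lemma~\ref{lem:simplicial cones}: in dimension $\ge 4$ there is either a non-simplicial facet, and one passes to an open subset, or a non-simplicial dual facet, and one passes to the orbit closure $V_\sigma(\rho)$, which is a retract of $X$ (Lemma~\ref{lemma: closed lifting}). Your ``projection'' fallback is essentially this second move. However, the $3$-dimensional case it lands in must then be settled via $\coker\beta^2$, not via torsion.
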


The proof of these results proceeds by combinatorial calculation and relies on three inputs: The explicit description of the groups $NK_1$ and $NK_2$ in \cite[Theorem 5.1]{Bass_NK_groups}; the identification (contained in \cite{chww-toric}) of the $cdh$-sheaves of differentials on toric varieties with the sheaves Danilov defines in \cite[Section 4]{danilov_geometry_1978}; and the description of the $M$-graded structure on these sheaves (see \cite[(4.2.3)]{danilov_geometry_1978} and \cite[(3.3)]{chww-toric}). 

We provide a short outline of the paper. In section \ref{sec:preliminary}, we very briefly review the necessary $K$-theoretic and toric geometric background and identify the obstructions to $K$-regularity that we will use later. In section \ref{sec:structural}, we prove several lemmas that help us to compute our obstruction groups via reduction to easier cases, in particular the case of a full-dimensional cone. In section \ref{sec:simplicial} we deal with the simplicial case, proving Theorem \ref{thm:K1-intro} (Theorem \ref{thm:K1}). Finally, in section \ref{sec:non-simplicial} we proceed by induction on the dimension of the toric variety to prove Theorem \ref{thm:K2-intro} (Theorem \ref{thm:K2}); the crucial step is the base case (dimension $3$), which we treat in Lemma \ref{lemma: base case}. 

Throughout the present paper, $k$ denotes a field of characteristic $0$, and all toric varieties will be assumed to be normal varieties over $k$. Also, all cones are assumed to be rational and polyhedral.

\subsection*{Acknowledgments:} This paper is based on the second author's Masters thesis at the University of Melbourne, supervised by the first author.  

\subsection*{Statement on AI use:} No AI was used in the preparation of this paper, whose mathematics was mostly completed before the recent rapid advances in AI capability. 

\section{Preliminaries and Notation}\label{sec:preliminary}

Let $N$ be a lattice and $M$ its dual lattice, whose pairing $M \times N \to \ZZ$ is written $\langle m, n \rangle$. We denote the corresponding real vector spaces by $N_{\RR}\coloneq N \otimes \RR$ and $M_{\RR} \coloneq M \otimes \RR$, respectively. We use the letter $\sigma$ to denote a strongly convex cone in $N$ and the letters $\rho$ and $\tau$ for faces of $\sigma$, written $\rho < \sigma$, $\tau < \sigma$. The dual of a cone $\sigma$ is written $\sigma\dual$, and the letters $\eta$ and $\theta$ are used for faces of $\sigma\dual$.

We denote by $U(\sigma,N) = \Spec (k[S_\sigma])$ the affine toric variety constructed from a cone $\sigma$ in a lattice $N$, where $S_\sigma$ denotes the monoid $\sigma\dual \cap M$. The monoid algebra $k[S_\sigma]$ is spanned by the monomials $\chi^m$, $m \in S_\sigma$. The toric variety constructed from a fan $\Delta$ in a lattice $N$ is denoted $X(\Delta,N)$. For a face $\rho$ in a fan $\Delta$, we denote by $i_\rho :V_\Delta(\rho) \to X(\Delta,N)$ the closed immersion of the associated orbit closure toric variety (c.f. \cite[Section 3.1]{fulton_introduction_1993}). Finally, for an integer $r$ we denote by $\Delta(r)$ the set of $r$-dimensional cones in a fan $\Delta$, and by $C(r)$ the set of $r$-dimensional faces of a cone $C$.

For a toric variety $X=X(\Delta,N)$ and an integer $q$, we denote by $\Omega^q_X$ the sheaf of degree $q$ K\"ahler differentials over $k$ on $X$. An $M$-grading on $\Omega_X^q$ is given by setting $\deg(\chi^{m_0} d\chi^{m_1} \wedge \cdots \wedge d\chi^{m_q}) = \sum_{i=0}^q m_i$. For toric varieties, there is a related sheaf, introduced by Danilov \cite[4.2]{danilov_geometry_1978}, which can be described as follows.

\begin{definition}\cite[Definition 3.1]{chww-toric}
    For a toric variety $X = X(\Delta,N)$, we define $\dani_X^q$ to be the $M$-graded coherent sheaf fitting into the sequence 
\begin{equation} \label{eq:exact_1}
\begin{tikzcd}
	0 & {\dani_{X}^q } & {\cO_{X} \otimes \bigwedge^q(M)} & {\bigoplus_{\rho \in \Delta(1)} \cO_{V_\Delta(\rho)} \otimes \bigwedge^{q-1}(\rho^\perp \cap M)}
	\arrow[from=1-1, to=1-2]
	\arrow[from=1-2, to=1-3]
	\arrow[from=1-3, to=1-4, "\delta"]
\end{tikzcd}.
\end{equation}
The component of the map $\delta$ indexed by $\rho$ sends $f \otimes (m_1 \wedge \cdots \wedge m_q)$ in $\cO_X \otimes \bigwedge^q(M)$ to 
\begin{equation}
    i_\rho^{*}(f) \otimes \left( \sum_{i=1}^q \langle m_i, n_\rho \rangle m_1 \wedge \cdots \wedge \hat{m_i} \wedge \cdots \wedge m_q  \right),
\end{equation}
where $n_\rho$ denotes the generator of $\rho \cap N$. 
\end{definition}

 For a cone $\sigma$ in $N$, we denote by $N_\sigma$ the smallest sublattice of $N$ such that $\sigma \subset N_{\RR}$, and by $V_{\sigma}$ the vector space $N_\sigma \otimes k$. For a lattice point $m \in M $, we define the face $\Gamma(m) \in \Delta$ to be the smallest face in $\Delta$ which contains $m$ (if such a cone exists). 
\begin{lemma}\cite[Equation 3.3]{chww-toric}
    For an affine toric variety $X=U(\sigma,N)$ and an element $m \in M$ Danilov's sheaf is given explicitly in degree $m$ by
    \begin{equation}
        \dani^q_{X}(m) = \begin{cases}
            \chi^m \otimes \bigwedge^q \left(V_{\Gamma(m)} \right) \textif m \in \sigma\dual \\
            0 \textif m \notin \sigma\dual.
            \end{cases}
    \end{equation}
\end{lemma}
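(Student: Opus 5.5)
We compute $\dani^q_X$ one degree $m \in M$ at a time. The sequence \eqref{eq:exact_1} is a sequence of $M$-graded $k[S_\sigma]$-modules, and $\delta$ preserves the grading. So for each $m$ we must show that the kernel of
\[
\delta_m \colon \cO_X(m)\otimes \textstyle\bigwedge^q(M) \to \bigoplus_{\rho\in\sigma(1)} \cO_{V(\rho)}(m)\otimes \textstyle\bigwedge^{q-1}(\rho^\perp\cap M)
\]
is $\chi^m\otimes\bigwedge^q(V_{\Gamma(m)})$ when $m \in \sigma\dual$, and is $0$ otherwise. Here $\bigwedge^q(M)$ is read after tensoring with $k$, and $V_{\Gamma(m)}$ is the $k$-span of the face $\Gamma(m)$ of $\sigma\dual$ regarded inside $M\otimes k$.

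First I record the graded pieces of the two sheaves involved.
\begin{itemize}
\item $\cO_X(m)=k\chi^m$ if $m\in\sigma\dual$, and $\cO_X(m)=0$ otherwise. This immediately gives the vanishing case $m \notin \sigma\dual$.
\item $V(\rho)=\Spec k[\sigma\dual\cap\rho^\perp\cap M]$, so $i_\rho^*$ sends $\chi^m$ to $\chi^m$ if $\langle m,n_\rho\rangle=0$, and to $0$ otherwise.
\end{itemize}

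Now fix $m\in\sigma\dual$. The $\rho$-component of $\delta_m$ is nonzero only for rays $\rho$ with $m\in\rho^\perp$. On such a component it is $\chi^m\otimes\iota_{n_\rho}$, where $\iota_{n_\rho}$ is contraction (interior product) with $n_\rho$, landing in $\bigwedge^{q-1}(\rho^\perp)$. Therefore
\[
\ker\delta_m=\chi^m\otimes\bigcap_{\rho:\ \langle m,n_\rho\rangle=0}\ker\bigl(\iota_{n_\rho}\colon \textstyle\bigwedge^q M_k\to\bigwedge^{q-1}M_k\bigr).
\]

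The rays $\rho$ with $\langle m,n_\rho\rangle=0$ are exactly the rays of the face $\tau=\sigma\cap m^\perp$ of $\sigma$. The smallest face of $\sigma\dual$ containing $m$ is $\Gamma(m)=\sigma\dual\cap\tau^\perp$, and its linear span is $\tau^\perp$. Indeed, $m$ lies in the relative interior of $\sigma\dual\cap\tau^\perp$, and faces of $\sigma\dual$ correspond to faces of $\sigma$ under $\tau \mapsto \sigma\dual \cap \tau^\perp$. Hence
\[
V_{\Gamma(m)}=\tau^\perp\otimes k=\bigcap_{\rho\in\tau(1)}\ker(n_\rho)\subset M_k .
\]

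The remaining step is linear algebra: for a subspace $W=\bigcap_i\ker(n_i)$ of a vector space $V$, I claim $\bigcap_i\ker(\iota_{n_i})=\bigwedge^q W$ inside $\bigwedge^q V$. To prove it, choose a basis $e_1,\dots,e_r,e_{r+1},\dots$ of $V$ such that the $n_i$ span the dual coordinates $e_1^*,\dots,e_r^*$ and $W$ is spanned by $e_{r+1},\dots$. The intersection of the kernels of $\iota_{n_i}$ equals the intersection of the kernels of $\iota_{e_j^*}$ for $j\le r$. Expanding an element of $\bigwedge^q V$ in the monomial basis, this intersection consists exactly of the combinations of monomials avoiding $e_1,\dots,e_r$, which is $\bigwedge^q W$.

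I expect the main obstacle to be the combinatorial identification $\operatorname{span}\Gamma(m)=(\sigma\cap m^\perp)^\perp$, that is, checking the face correspondence under duality. That $\Gamma(m)\subseteq\tau^\perp$ is clear. For the reverse containment of spans, I use that the face $\sigma\dual\cap\tau^\perp$ has dimension $n-\dim\tau$, so its span is all of $\tau^\perp$. Since $m$ lies in the relative interior of this face, it is the smallest face containing $m$, which gives $\Gamma(m)=\sigma\dual\cap\tau^\perp$.
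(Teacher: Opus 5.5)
Your proof is correct. Note, though, that the paper does not prove this lemma; it quotes it from \cite[(3.3)]{chww-toric} (cf.\ \cite[(4.2.3)]{danilov_geometry_1978}).

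You instead derive it directly from the defining sequence \eqref{eq:exact_1}. Since $\delta$ is $M$-homogeneous, and $i_\rho^*\chi^m$ is $\chi^m$ or $0$ according as $\langle m,n_\rho\rangle=0$ or not, the degree-$m$ kernel is $\chi^m\otimes\bigcap_{\rho\in\tau(1)}\ker\iota_{n_\rho}$ with $\tau=\sigma\cap m^\perp$. Face duality and the identity $\bigcap_i\ker\iota_{n_i}=\bigwedge^q\bigl(\bigcap_i\ker n_i\bigr)$ then finish it.

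The citation buys brevity. Your route makes the lemma self-contained and shows exactly where strong convexity of $\sigma$ enters: each face $\tau$ is generated by its rays, so $\tau^\perp=\bigcap_{\rho\in\tau(1)}n_\rho^\perp$. It also applies unchanged when $\sigma$ is not full-dimensional.

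Two small comments. First, you were right to read the $\rho$-component of $\delta$ as the contraction $\iota_{n_\rho}$, i.e.\ with the signs $(-1)^{i-1}$ that the paper's displayed formula omits. Without them the map is not alternating: $u\wedge u=0$ would be sent to $2\langle u,n_\rho\rangle u$. Second, your relative-interior assertion can be replaced by a one-line observation. A face $\sigma\dual\cap(\tau')^\perp$ contains $m$ if and only if $\tau'\subseteq\sigma\cap m^\perp=\tau$, so by order reversal the smallest such face is $\sigma\dual\cap\tau^\perp$. Its dimension $\operatorname{rank}N-\dim\tau$ then gives its span as $\tau^\perp$.
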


In other words, $\dani_X^q(m)$ can be described only in terms of $\Gamma(m)$, ignoring more complicated structure in $S_\sigma$. This is also true for $\Omega_X^q$ when $X$ is smooth, but not in general. We have: 
\begin{lemma}\cite[Proposition 4.3]{danilov_geometry_1978} \label{lemma: alpha}
    If $X = X(\Delta,N)$ is a smooth toric variety then there is a natural isomorphism $\alpha^q_X: \dani_{ X}^q \to \Omega_X^q$ defined by
    \begin{equation}
        \chi^{m_0} \otimes \left( m_1 \wedge \cdots \wedge m_q \right) \mapsto \chi^m d\chi^{m_1} \wedge \cdots \wedge d\chi^{m_q},
    \end{equation}
    where $m = m_0 - \sum_{i=1}^q m_i$.
\end{lemma}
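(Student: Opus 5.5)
The statement to prove is Lemma \ref{lemma: alpha}: for smooth $X$, the map $\alpha^q_X$ is a well-defined isomorphism $\dani_X^q \to \Omega_X^q$.

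\textbf{Reduction to the affine case.} Both sheaves are coherent and compatible with restriction to open affine toric subvarieties $U(\tau,N)\subset U(\sigma,N)$. The formula for $\alpha^q_X$ is visibly compatible with these restrictions, so it suffices to treat $X=U(\sigma,N)$ with $\sigma$ smooth. Then $\sigma$ is generated by part of a basis $e_1,\dots,e_r$ of $N$. Extending this to a basis of $N$ gives
\[
X\cong \mathbb{A}^r\times \mathbb{G}_m^{s}, \qquad k[S_\sigma]=k[x_1,\dots,x_r,y_1^{\pm1},\dots,y_s^{\pm1}],
\]
with dual basis $m_1,\dots,m_{r+s}$ of $M$.

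\textbf{$\alpha$ lands in $\Omega^q_X$.} Take a homogeneous element $\chi^{m_0}\otimes(u_1\wedge\cdots\wedge u_q)$ of $\dani^q_X$ of degree $m_0$. Define $\alpha$ by
\[
\chi^{m_0}\otimes(u_1\wedge\cdots\wedge u_q)\;\mapsto\;\chi^{m_0}\,\frac{d\chi^{u_1}}{\chi^{u_1}}\wedge\cdots\wedge\frac{d\chi^{u_q}}{\chi^{u_q}}.
\]
This agrees with the stated formula $\chi^{m}d\chi^{u_1}\wedge\cdots$, where $m=m_0-\sum u_i$. A priori it is only a differential on the torus $T=\Spec k[M]$. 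On $T$ the map $k[M]\otimes\bigwedge^q M\to\Omega^q_{T}$ is an isomorphism, since $\Omega^1_T$ is free on the $d\log$'s of a basis of $M$. The task is therefore to show that the image of $\dani^q_X$ is exactly $\Omega^q_X\subset\Omega^q_T$. Here we use that $\Omega^q_X$ is locally free, hence torsion-free, so it embeds in $\Omega^q_T$.

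\textbf{Degreewise comparison.} Both sides are $M$-graded and the map preserves degree, so we compare them in each degree $m_0\in S_\sigma$. Write $m_0=\sum a_i m_i$ with $a_i\ge 0$ for $i\le r$. The degree-$m_0$ part of $\Omega^q_X$ is spanned by the forms $\chi^{m_0}\bigwedge_{i\in I} d\log$ of basis elements, subject to one condition: if $i\in I$ with $i\le r$, then $a_i\geq1$. This holds because $dx_i=x_i\,d\log x_i$ and the $dy_j$ are units times $d\log y_j$.

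On the other side, $\Gamma(m_0)$ is the face of $\sigma\dual$ cut out by the $e_i$ with $a_i=0$. Hence $V_{\Gamma(m_0)}$ is the span of the $m_i$ with $i>r$ or $a_i\geq 1$, by the lemma from \cite[Equation 3.3]{chww-toric}. The two descriptions coincide, so $\alpha$ is a degreewise bijection and therefore an isomorphism.

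\textbf{Naturality.} Naturality, meaning compatibility with toric morphisms and open restrictions, is immediate from the formula.

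\textbf{Main obstacle.} The only real subtlety is the identification of $V_{\Gamma(m)}$ with the span of the allowed $d\log$'s, including the precise face-versus-$\perp$ convention. With the explicit monomial description this is a direct check.
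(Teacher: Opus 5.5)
Your proof is correct. The paper gives no argument of its own here; it simply cites Danilov, so what you have written is a self-contained verification rather than a reconstruction of the paper's proof.

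\textbf{Your route.} You embed $\dani^q_X\subset k[S_\sigma]\otimes\bigwedge^q M$ into $\Omega^q_T\cong k[M]\otimes\bigwedge^q M$ via $d\log$ forms, and you embed $\Omega^q_X$ there by torsion-freeness. You then compare the two subspaces degree by degree, using the description $\dani^q_X(m)=\chi^m\otimes\bigwedge^q V_{\Gamma(m)}$ and coordinates $X\cong\mathbb{A}^r\times\mathbb{G}_m^s$. The key identification is right: $\Gamma(m_0)=\sigma\dual\cap\tau^\perp$, where $\tau$ is the face of $\sigma$ spanned by those $e_i$ ($i\le r$) with $a_i=0$. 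Hence $V_{\Gamma(m_0)}$ is spanned by the $m_i$ with $i>r$ or $a_i\ge 1$. Gluing is automatic, since each chart's map is the restriction of one map on $T$ and all the sheaves involved embed into forms on $T$.

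\textbf{Conceptual alternative.} The defining sequence \eqref{eq:exact_1} encodes a coordinate-free version for smooth $X$. Read $\cO_X\otimes\bigwedge^q M$ as the sheaf $\Omega^q_X(\log D)$ of forms with logarithmic poles along $D=X\setminus T$, via exactly your $d\log$ map. Then $\delta$ is the residue along the divisors $V_\Delta(\rho)$, and $\alpha$ identifies $\dani^q_X$ with the kernel of the residues, which is $\Omega^q_X$ because $D$ is simple normal crossings.

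\textbf{Comparison.} The residue version avoids coordinates and explains why $\delta$ has the shape it does. Yours is more elementary, but it relies on the graded formula from \cite{chww-toric}. That formula is itself a direct degreewise computation of $\ker\delta$, independent of the smooth case, so there is no circularity.
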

In the $cdh$ topology, blow-ups give covers. For toric varieties, a \emph{refinement} of the associated fan corresponds to an abstract blow-up. Any fan can be refined to a smooth fan in finitely many steps, and for any two refinements of the same fan, there must exist a finite common refinement \footnote{Recent work has proven the stronger result that for any two refinements, there exists a common refinement by stellar subdivisions. Stellar subdivisions correspond to actual toric blow-ups, not just abstract blow-ups \cite{adiprasito2024triangulationscommonstellarsubdivision}.}. Consequently, every toric variety has a $cdh$ cover by smooth toric varieties. The authors of \cite{chww-toric} proved that $\dani_X^q$ is a $cdh$ sheaf, and therefore the natural isomorphisms $\alpha$ of Lemma \ref{lemma: alpha} induce an isomorphism in $cdh$ cohomology groups. Let $(a_X^q)_*:H^p_{\zar}(X,\Omega^q_X) \to H^p_{\cdh}(X,\Omega_X^q)$ and $(\tilde a_X^q)_*: H^p_{\zar}(X,\dani_X^q) \to H^p_{\cdh}(X,\dani_X^q)$ be the maps induced by sheafification. 

\begin{lemma} \cite[Theorem 4.1]{chww-toric} \label{lemma: danilov cdh-sheafification}
    If $X$ is a toric variety then, firstly, the map $(\tilde a_X^q)_*:H^p_{\zar}(X,\dani_X^q) \to H^p_{\cdh}(X,\dani_X^q)$ induced by sheafification is an isomorphism, and secondly, $\alpha^q_X$ induces an isomorphism of cohomology groups
    \begin{equation}
        (\alpha_X^q)_*: H^p_{\cdh}(X,\dani_X^q) \to H^p_{\cdh}(X,\Omega^q_X).
    \end{equation}
\end{lemma}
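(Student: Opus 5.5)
The plan is to treat the two assertions separately. Both rest on the fact, noted above, that every toric variety has a $cdh$ cover by smooth toric varieties obtained by refining the fan. For a toric variety one can also use abstract blow-up squares in which every scheme is toric: a refinement $\Delta'\to\Delta$, an invariant closed subscheme $Z\subset X$ (a union of orbit closures) over which the map is not an isomorphism, and its preimage $E$. Both statements will be reduced to a descent property of $\dani^q$ for such toric squares, and to the smooth case, where Lemma \ref{lemma: alpha} applies.

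For the first assertion I will show that $\dani^q$ satisfies $cdh$ descent on toric varieties. This amounts to two facts.
\begin{enumerate}
\item For a refinement $\pi: X'=X(\Delta',N)\to X=X(\Delta,N)$ we have $\pi_*\dani^q_{X'}=\dani^q_X$ and $R^i\pi_*\dani^q_{X'}=0$ for $i>0$.
\item For an abstract blow-up square $(X',Z,E)$ as above, the sequence $0\to\dani^q_X\to\pi_*\dani^q_{X'}\oplus\dani^q_Z\to\pi_*\dani^q_E$ is exact.
\end{enumerate}
The second fact should follow from the defining sequence \eqref{eq:exact_1}: after restricting to an orbit closure, the kernel of $\delta$ is again described by the same recipe with $M$ replaced by $\rho^\perp\cap M$. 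Granted both facts, the standard argument applies. One compares Zariski and $cdh$ hypercohomology on the toric site and inducts on dimension and on the number of cones. This gives $H^p_{\zar}(X,\dani^q_X)\cong H^p_{\cdh}(X,\dani^q_X)$, since both sides satisfy the same Mayer--Vietoris sequences for toric squares and agree on smooth toric varieties. On smooth toric varieties $\dani^q\cong\Omega^q$ by Lemma \ref{lemma: alpha}, and $\Omega^q$ is known to satisfy $cdh$ descent on smooth schemes in characteristic $0$ by resolution of singularities and blow-up formulas.

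The main obstacle is the vanishing of $R^i\pi_*\dani^q_{X'}$. I would work affine-locally, with $X=U(\sigma,N)$, and degree by degree in $M$ using the explicit description of Lemma 2.2. In degree $m$, the Čech complex of $\dani^q_{X'}$ for the open cover of $X'$ by the affine pieces $U(\tau,N)$, $\tau\in\Delta'$ maximal, is a complex of vector spaces of the form $\bigwedge^q V_{\Gamma_\tau(m)}$. Here $\Gamma_\tau(m)$ is the smallest face of $\tau$ containing $m$, and the term is zero when $m\notin\tau\dual$. The cones $\tau$ with $m\in\tau\dual$ form the subfan lying in the half-space $\{\langle m,\cdot\rangle\ge0\}$ intersected with $|\sigma|$, which is convex and hence contractible. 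I would first filter this complex by the face $\Gamma_\tau(m)$, equivalently by the cone $\tau\cap m^\perp$. The graded pieces are then cochain complexes of convex subcomplexes with constant coefficients, and they should be acyclic in positive degrees with $H^0=\dani^q_X(m)$. Getting this combinatorial acyclicity right, especially the coefficient change along faces lying in $m^\perp$, is where I expect the real work to lie.

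For the second assertion, the formula in Lemma \ref{lemma: alpha} makes sense on any toric $X$ as a map into rational differentials. I will instead define $(\alpha^q_X)_*$ through the $cdh$ sheaves. Since $cdh$ sheaves are determined by their values on a basis, and in characteristic $0$ smooth schemes form one, it suffices to compare $a_{\cdh}\dani^q$ and $a_{\cdh}\Omega^q$ on smooth toric covers. There $\alpha$ is an isomorphism compatible with the maps induced by refinements. Compatibility holds because $\alpha$ is given by a formula in monomials that is natural for toric morphisms. Descent along the toric $cdh$ covers then gives the isomorphism $H^p_{\cdh}(X,\dani^q_X)\cong H^p_{\cdh}(X,\Omega^q_X)$.
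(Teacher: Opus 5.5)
The paper gives no proof of this lemma; it is imported from \cite[Theorem 4.1]{chww-toric}. Judged on its own, your argument has a genuine gap. The second half of fact (1), $R^i\pi_*\dani^q_{X'}=0$ for $i>0$, is false in general, already for $q=1$ on a surface; you identify it as the main step. (The half $\pi_*\dani^q_{X'}=\dani^q_X$ is fine.) Take $N=\ZZ^2$ and $\sigma=\mathrm{cone}(e_2,2e_1-e_2)$, the $A_1$ singularity. Let $X'$ be the resolution obtained by inserting the ray $\rho$ through $e_1$, with maximal cones $\tau_1=\mathrm{cone}(e_2,e_1)$ and $\tau_2=\mathrm{cone}(e_1,2e_1-e_2)$. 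Then $E=V(\rho)\cong\mathbb{P}^1$ lies over the fixed point $Z$. In degree $m=0$ your \v{C}ech complex for $q=1$ is $0\oplus 0\to(\rho^\perp\cap M)\otimes k\cong k$. This is because the smallest face of $\tau_i\dual$ containing $0$ is $\{0\}$, while that of the half-plane $\rho\dual$ is the line $\rho^\perp$. Hence $H^1(X',\Omega^1_{X'})\neq 0$, so $R^1\pi_*\Omega^1_{X'}\neq 0$. Geometrically, $c_1(\mathcal{O}_{X'}(E))$ restricts to $-2[\mathrm{pt}]\neq 0$ in $H^1(E,\Omega^1_E)$. This is exactly the ``coefficient change along faces in $m^\perp$'' you flagged: the graded pieces are not acyclic.

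The strategy also cannot be rescued by a sharper version of (1). If $R\pi_*\dani^q_{X'}\simeq\dani^q_X$ held and the square gave a Mayer--Vietoris triangle, you would get $R\pi_*\dani^q_E\simeq\dani^q_Z$, i.e.\ $H^1(\mathbb{P}^1,\Omega^1)=0$ in the example. What actually happens, as the $\cdh$ Mayer--Vietoris sequence shows, is that the extra class in $H^1(X',\Omega^1_{X'})$ maps isomorphically onto $H^1(E,\Omega^1_E)$; this is why $H^1_{\cdh}(X,\Omega^1)=0$. So what you must prove is that the whole square is homotopy cartesian for $\dani^q$, i.e.\ $\mathrm{cone}(\dani^q_X\to R\pi_*\dani^q_{X'})\simeq\mathrm{cone}(\dani^q_Z\to R\pi_*\dani^q_E)$. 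This needs the full triangle, not just the left exact sequence in (2). It also needs the non-acyclic contributions of $X'$ to be matched against those of $E$, degree by degree, rather than made to vanish separately. Using star subdivisions keeps $Z$ and $E$ irreducible toric, which also spares you from defining $\dani$ on unions of orbit closures.

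Your second part has a separate problem. Smooth toric varieties do not form a basis of the $\cdh$ site of $X$, and $\dani^q$ is not defined off toric varieties. So the comparison of $\cdh$ sheafifications cannot go through as written. It must either run on a toric site, together with a comparison of that site's cohomology with the full $\cdh$ cohomology, or be deduced from the descent statement above plus $H_{\zar}=H_{\cdh}$ for $\Omega^q$ on smooth schemes.
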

While Danilov's map $\alpha^q_X$ is defined only for smooth toric varieties, its inverse, which we denote by $\beta^q_X$, can be extended to all toric varieties, making it a natural comparison map.
\begin{lemma} \cite[Equation 3.4]{chww-toric} \label{lemma: beta}
    For a toric variety $X=X(\Delta,N)$, there is a $M$-graded morphism of sheaves $\beta_X^q: \Omega_X^q \to \dani_X^q$ given by
    \begin{equation}
        \chi^{m_0} d\chi^{m_1} \wedge \cdots \wedge d\chi^{m_q} \mapsto \chi^m \otimes \left( m_1 \wedge \cdots \wedge m_q \right),
    \end{equation}
    where $m = \sum_{i=0}^q m_i$. Moreover, if $X$ is smooth then $\beta_X^q$ and $\alpha_X^q$ are inverses.
\end{lemma}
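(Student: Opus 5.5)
The plan is to check directly that the stated formula defines an $M$-graded morphism $\Omega^q_X \to \dani^q_X$, working affine-locally on $X(\Delta,N)$. On each affine chart $U(\sigma,N)$ the assignments are compatible with localization, since localization only inverts monomials. So it suffices to treat $X = U(\sigma,N)$ with $A = k[S_\sigma]$.

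\emph{Step 1: a map out of $\Omega^q_A$.} Define $D\colon A \to A \otimes M$ by $D(\chi^m) = \chi^m \otimes m$, extended linearly. The Leibniz rule holds because
\begin{equation*}
D(\chi^{m}\chi^{m'}) = \chi^{m+m'} \otimes (m+m') = \chi^m D(\chi^{m'}) + \chi^{m'} D(\chi^m).
\end{equation*}
Hence $D$ is a $k$-derivation and induces an $A$-linear map $\Omega^1_A \to A \otimes M$. Taking exterior powers over $A$ gives $\Omega^q_A \to A \otimes \bigwedge^q M$. On generators,
\begin{equation*}
\chi^{m_0} d\chi^{m_1}\wedge\cdots\wedge d\chi^{m_q} \longmapsto \chi^{m_0+\cdots+m_q} \otimes (m_1\wedge\cdots\wedge m_q).
\end{equation*}
This is well defined by construction and visibly $M$-graded.

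\emph{Step 2: the image lies in $\dani^q_X$.} By the exact sequence \eqref{eq:exact_1}, we must show $\delta$ kills this element. Fix a ray $\rho$ and put $m = \sum_{i=0}^q m_i$. If $\langle m, n_\rho\rangle > 0$, then $i_\rho^*\chi^m = 0$ and there is nothing to check. Otherwise $\langle m, n_\rho\rangle = 0$, and since every $m_i$ lies in $\sigma\dual$, each $\langle m_i, n_\rho\rangle = 0$. Then every term $\langle m_i, n_\rho\rangle\, m_1\wedge\cdots\widehat{m_i}\cdots\wedge m_q$ in $\delta$ vanishes. This is the key observation, and I expect it to be the main point needing care. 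The only subtlety is to do the check degreewise using the explicit formula for $\dani^q_X(m)$: all $m_i$ lie in the face $\sigma\dual \cap \Gamma(m)$ determined by $m$, so $m_1\wedge\cdots\wedge m_q \in \bigwedge^q V_{\Gamma(m)}$.

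\emph{Step 3: the smooth case.} Suppose $X$ is smooth. Then $\beta\circ\alpha$ sends $\chi^{m_0}\otimes(m_1\wedge\cdots\wedge m_q)$ to $\chi^{m_0 - \sum m_i}\, d\chi^{m_1}\wedge\cdots\wedge d\chi^{m_q}$, and $\beta$ sends that back to $\chi^{m_0}\otimes(m_1\wedge\cdots\wedge m_q)$. So $\beta\circ\alpha = \id$, and since $\alpha$ is an isomorphism by Lemma \ref{lemma: alpha}, $\beta$ is its inverse.

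Naturality and gluing follow because all formulas are given on monomials compatibly with face inclusions.
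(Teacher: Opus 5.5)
The paper does not prove this lemma; it imports the map from \cite[Equation 3.4]{chww-toric}. Your argument is a correct, self-contained verification of what is cited.

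Your derivation $D(\chi^m)=\chi^m\otimes m$ is the composite of two maps: restriction to the torus, followed by the standard identification $\Omega^1_{k[M]}\cong k[M]\otimes M$, $d\chi^m\mapsto\chi^m\otimes m$. So your construction exhibits $\beta^q_X$ as ``restrict the form to $\mathbf{T}_N$'' followed by the check that the image lies in $\dani^q(X)\subset k[S_\sigma]\otimes\bigwedge^q M$. Your Step 2 settles that check completely: the quantities $\langle m_i,n_\rho\rangle\ge 0$ sum to $0$, so each vanishes. The degreewise remark is a consistent restatement rather than an extra requirement.

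This viewpoint gives more than the bare citation does. Compatibility with localization, gluing and the $M$-grading become automatic. It is also exactly the factorization of $\beta^q_X$ through $\Omega^q(X)\to\Omega^q(\mathbf{T}_N)$ that the paper later exploits in Lemma~\ref{lemma: torsion-kernel}.

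One small point in Step 3 should be tightened. Take $\chi^{m_0}\otimes(m_1\wedge\cdots\wedge m_q)$ with $m_0-\sum m_i\notin S_\sigma$. Then $\chi^{m_0-\sum m_i}d\chi^{m_1}\wedge\cdots\wedge d\chi^{m_q}$ is not literally one of the generators on which you defined $\beta$, so evaluating $\beta$ on it needs a word. One fix is to pass to the torus, where the formula is valid for all exponents and $\dani^q(X)\to\dani^q(\mathbf{T}_N)$ is injective.

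It is cleaner to compose in the other order. For $m_0,\dots,m_q\in S_\sigma$ you get
\begin{equation*}
\alpha\bigl(\beta(\chi^{m_0}d\chi^{m_1}\wedge\cdots\wedge d\chi^{m_q})\bigr)=\chi^{m_0}d\chi^{m_1}\wedge\cdots\wedge d\chi^{m_q},
\end{equation*}
and no exponent leaves $S_\sigma$. These forms span $\Omega^q_X$, and $\alpha$ is bijective, so this already gives $\beta=\alpha^{-1}$.
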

This comparison map is particularly useful because it can be studied directly, as we do in the present paper, but it also relates to the sheafification map on cohomology groups, $(a_X^q)_*$, as follows.
\begin{lemma} \label{lemma: sheafification is beta}
    If $X$ is a toric variety, then for each integer $q$, there are isomorphisms $\ker((a_X^q)_*) \cong \ker((\beta_X^q)_*)$ and $\coker((a_X^q)_*) \cong \coker((\beta_X^q)_*)$, where $(\beta_X^q)_*: H^p(X,\Omega_X^q) \to H^p(X,\dani_X^q)$ is the map on cohomology groups induced by $\beta_X^q$ and $(a_X^q)_*$ is as in Lemma \ref{lemma: danilov cdh-sheafification}.
\end{lemma}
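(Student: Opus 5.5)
The plan is to factor the sheafification map through Danilov's sheaf and compare kernels and cokernels of the two composable maps. Naturality of sheafification applied to the morphism $\beta_X^q:\Omega_X^q\to\dani_X^q$ gives a commutative square
\begin{equation*}
\begin{tikzcd}
	H^p_{\zar}(X,\Omega_X^q) \arrow[r,"(a_X^q)_*"] \arrow[d,"(\beta_X^q)_*"] & H^p_{\cdh}(X,\Omega_X^q) \arrow[d,"(\beta_X^q)_{\cdh,*}"] \\
	H^p_{\zar}(X,\dani_X^q) \arrow[r,"(\tilde a_X^q)_*"] & H^p_{\cdh}(X,\dani_X^q)
\end{tikzcd}
\end{equation*}
By Lemma \ref{lemma: danilov cdh-sheafification}, the bottom arrow $(\tilde a_X^q)_*$ is an isomorphism. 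If we also show that the right vertical arrow is an isomorphism, then $(a_X^q)_*$ and $(\beta_X^q)_*$ differ by composition with isomorphisms on both sides. Their kernels and cokernels are then isomorphic, which is the statement of the lemma.

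The main step is to show that $(\beta_X^q)_{\cdh,*}$ is an isomorphism on $cdh$ cohomology. Lemma \ref{lemma: danilov cdh-sheafification} tells us that $(\alpha_X^q)_*:H^p_{\cdh}(X,\dani_X^q)\to H^p_{\cdh}(X,\Omega_X^q)$ is an isomorphism, where $\alpha$ is defined $cdh$-locally. So it suffices to check that the $cdh$ sheafifications of $\beta^q$ and $\alpha^q$ are mutually inverse morphisms of $cdh$ sheaves. Equality of morphisms of $cdh$ sheaves can be tested on a $cdh$ cover. Every toric variety has a $cdh$ cover by smooth toric varieties, obtained from a smooth refinement $\Delta'$ of $\Delta$. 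On each smooth piece $X'$, Lemma \ref{lemma: beta} gives $\beta^q_{X'}=(\alpha^q_{X'})^{-1}$.

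We also need $\beta$ to be natural with respect to the toric morphisms $X'\to X$ induced by refinements, so that its restriction to the cover is $\beta_{X'}$. This follows from the explicit formula in Lemma \ref{lemma: beta}: it is $M$-graded, and refinement does not change $M$. Pullback sends $\chi^{m_0}d\chi^{m_1}\wedge\cdots\wedge d\chi^{m_q}$ to the same expression, and it sends $\chi^m\otimes(m_1\wedge\cdots\wedge m_q)$ to the same element. Hence $\beta_{\cdh}$ agrees with $\alpha^{-1}$ $cdh$-locally, and therefore globally. I expect the main obstacle to be making this compatibility precise, since $\alpha$ is only defined on the smooth cover and is glued to a $cdh$ morphism, presumably as in \cite{chww-toric}. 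The gluing is consistent because any two smooth refinements admit a common smooth refinement and the formulas are compatible.

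With the right vertical arrow an isomorphism, $(a_X^q)_* = (\beta_{\cdh})_*^{-1}\circ(\tilde a_X^q)_*\circ(\beta_X^q)_*$. This is $(\beta_X^q)_*$ composed on the left with an isomorphism. Composing with an isomorphism on the target side does not change the kernel and identifies the cokernels. This gives $\ker((a_X^q)_*)\cong\ker((\beta_X^q)_*)$ and $\coker((a_X^q)_*)\cong\coker((\beta_X^q)_*)$.
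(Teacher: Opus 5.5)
Your proposal is correct and takes essentially the same route as the paper. Both sheafify $\beta_X^q$ to get the commutative square, use Lemma \ref{lemma: danilov cdh-sheafification} to write $(a_X^q)_* = (\alpha_X^q)_*\circ(\tilde a_X^q)_*\circ(\beta_X^q)_*$ with the outer factors isomorphisms, and read off the kernels and cokernels; your check on a smooth toric $cdh$ cover that the sheafified $\beta$ is inverse to $\alpha$ simply spells out what the paper's diagram takes for granted.
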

\begin{proof}
    By taking the morphism of sheaves $\beta^q_X$ on covers and applying the sheafification functor, we obtain the following commutative diagram,
\[\begin{tikzcd}
	{H^p_{\zar}(X,\Omega_X^q)} & {H^p_{\cdh}(X,\Omega_X^q)} \\
	{H^p_{\zar}(X,\dani_X^q)} & {H^p_{\cdh}(X,\dani_X^q)}
	\arrow["(a^q_X)_*", from=1-1, to=1-2]
	\arrow["(\beta_X^q)_*"', from=1-1, to=2-1]
	\arrow["{(\beta_X^q)_*}", shift left=3, from=1-2, to=2-2]
	\arrow["\cong"{description}, draw=none, from=1-2, to=2-2]
	\arrow["{(\tilde a_X^q)_*}", from=2-1, to=2-2]
	\arrow["\cong"', draw=none, from=2-1, to=2-2]
	\arrow["{(\alpha_X^q)_*}", shift left=3, from=2-2, to=1-2]
\end{tikzcd}.\]
Therefore $(a_X)_* = (\alpha_X^q)_* \circ (\tilde a_X)_* \circ (\beta_X^q)_*$. Since $(\alpha_X^q)_*$ and $(\tilde a_X)_*$ are isomorphisms by Lemma \ref{lemma: danilov cdh-sheafification}, the result follows. 
\end{proof}
This allows us to prove the non-vanishing of $NK$ groups via the following theorem. We note that, in this theorem, all differentials are taken relative to $\QQ$.
\begin{theorem} \label{thm:NK_groups} \cite[Theorem 5.5]{Bass_NK_groups}
For any commutative ring $R$ containing $\QQ$ and any integer $q$,

     \begin{equation}
        NK_q(R) \cong \left[
        \begin{array}{ll} \coker((a_R^q)_*)  \\
        \oplus \ker((a_R^{q-1})_*)  \\
        \oplus AQ \end{array} \right] \otimes (t \QQ[t]),
    \end{equation}
    where $(a_R^q)_*:H^0_{\zar}(\Omega_R^{q}) \to H^0_{\cdh}(\Omega_{R}^{q})$ is the map on cohomology groups induced by sheafification for the cdh topology, and $AQ$ is a summand which is not relevant to the present paper. 
\end{theorem}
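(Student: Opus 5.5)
This statement is imported from \cite[Theorem 5.5]{Bass_NK_groups}. The plan below reconstructs how it is proved there; the steps that are not routine and the residual bookkeeping are flagged at the end.

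\textbf{Step 1: reduce to the fibre of $K \to KH$.} Let $\mathcal{F}_K(R)$ be the homotopy fibre of $K(R)\to KH(R)$. Since $KH$ is $\mathbb{A}^1$-homotopy invariant, $NKH_*=0$. Taking $N$ of the fibre sequence then gives $NK_q(R)\cong N\mathcal{F}_{K,q}(R)$.

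\textbf{Step 2: pass to cyclic homology.} By the Cortiñas--Haesemeyer--Schlichting--Weibel theorem for $\QQ$-algebras, $\mathcal{F}_K(R)\simeq \mathcal{F}_{HC}(R)[1]$. Here $\mathcal{F}_{HC}$ is the fibre of the map from cyclic homology over $\QQ$ to its cdh-fibrant replacement $\mathbb{H}_{\cdh}(-,HC)$. Hence
\[
NK_q(R)\cong N\mathcal{F}_{HC,q-1}(R).
\]

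\textbf{Step 3: replace $HC$ by $HH$.} For any functor $E$ with $N E = E(-\otimes\QQ[t])/E(-)$, Kassel's computation gives $NHC_n(R)\cong HH_n(R)\otimes t\QQ[t]$. The same argument applies objectwise to cdh-fibrant replacements, because $\mathbb{H}_{\cdh}$ commutes with $-\otimes\QQ[t]$ in the sense needed. Applying it to $\mathcal{F}_{HC}$ yields
\[
NK_q(R)\cong \pi_{q}\,\mathcal{F}_{HH}(R)\otimes t\QQ[t].
\]
The index shift comes from the $S$-operator in Connes' sequence.

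\textbf{Step 4: Hodge decomposition.} In characteristic zero, $HH$ splits into Hodge pieces $HH^{(i)}$. The top piece is $HH^{(i)}_i=\Omega^i$. The lower pieces $HH^{(i)}_n$ with $n>i$ are André--Quillen homology. On the cdh side, smooth schemes have $HH^{(i)}\simeq\Omega^i[i]$ (Hochschild--Kostant--Rosenberg). Cdh-locally every scheme is smooth, so
\[
\mathbb{H}_{\cdh}(R,HH^{(i)})\simeq R\Gamma_{\cdh}(R,\Omega^i)[i].
\]
Writing the long exact sequence of each weight-$i$ fibre, the terms in degree $q$ are as follows.
\begin{itemize}
\item Weight $i=q$ contributes $\coker\big(\Omega^q_R\to H^0_{\cdh}(\Omega^q)\big)$.
\item Weight $i=q-1$ contributes $\ker\big(\Omega^{q-1}_R\to H^0_{\cdh}(\Omega^{q-1})\big)$ as part of an extension.
\item All remaining contributions, namely higher cdh cohomology $H^p_{\cdh}(\Omega^{q+p})$ and the André--Quillen pieces, are grouped into the summand $AQ$.
\end{itemize}
Everything is a $\QQ$-vector space, so the extensions split. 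This gives the stated direct sum.

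\textbf{Main obstacle.} The deep input is the identification of Step 2, the KABI theorem relating $K$-theory to cyclic homology via cdh descent. I would simply cite it. The remaining real work is bookkeeping in Step 4: keeping track of indices and checking that the cokernel in weight $q$ and the kernel in weight $q-1$ are exactly the pieces not absorbed into $AQ$.
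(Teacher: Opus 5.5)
The paper gives no proof of this statement; it simply quotes \cite[Theorem 5.5]{Bass_NK_groups}. Your outline has the right architecture for that source: the fibre of $K\to KH$, the KABI equivalence $\mathcal{F}_K\simeq\mathcal{F}_{HC}[1]$, Kassel's formula, and the Hodge decomposition with cdh-local HKR. However, Step 3 contains an off-by-one error, so as written that step is false.

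Kassel's formula $NHC_n(R)\cong HH_n(R)\otimes t\QQ[t]$ involves no shift of degree, and neither does its cdh-fibrant analogue. Combining it with Step 2 therefore gives
\[
NK_q(R)\cong N\mathcal{F}_{HC,q-1}(R)\cong \pi_{q-1}\mathcal{F}_{HH}(R)\otimes t\QQ[t],
\]
not $\pi_q$. The appeal to the $S$-operator is spurious.

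Your formula fails already for $R=\QQ[\varepsilon]$ with $\varepsilon^2=0$. The cdh side only sees $\Spec\QQ$, so $\mathcal{F}_{HH}(R)$ is the relative complex $HH(R,(\varepsilon))$, and its $\pi_0$ is $\varepsilon\QQ\neq 0$. On the other hand $NK_0(R)=0$, because $K_0$ is nil-invariant.

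Steps 3 and 4 also do not fit together. In degree $q$, the weight-$q$ fibre contributes $\ker(a^q)$ and the weight-$(q+1)$ fibre contributes $\coker(a^{q+1})$; that is not the pair in the statement. The groups you actually list in Step 4 are $\coker(a^q)$, $\ker(a^{q-1})$ and $H^p_{\cdh}(\Omega^{q+p})$, which are exactly the groups of $\pi_{q-1}$. So your conclusion comes out right only because Step 4 silently works in the correct degree.

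Replacing $\pi_q$ by $\pi_{q-1}$ repairs the argument, and the bookkeeping is then simpler than you suggest. The Hodge splitting is natural, so it passes to the cdh-fibrant replacement. In degree $q-1$ each weight-$i$ fibre contributes exactly one group:
\begin{itemize}
\item $\ker(a^{q-1})$ for $i=q-1$;
\item $\coker(a^{q})$ for $i=q$;
\item $H^p_{\cdh}(R,\Omega^{q+p})$ for $i=q+p$ with $p\ge 1$;
\item $HH^{(i)}_{q-1}(R)$ for $i<q-1$.
\end{itemize}
No extensions arise at all.

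Two further points are treated too quickly.
\begin{itemize}
\item The cdh-fibrant version of Kassel's formula is not formal, because cdh covers of $X\times\mathbb{A}^1$ need not come from covers of $X$. One way to prove it is to pull back a smooth cdh hypercover of $X$ and use that $HC(Y)\to\mathbb{H}_{\cdh}(Y,HC)$ is an equivalence for smooth $Y$.
\item For a commutative ring containing $\QQ$ that is not essentially of finite type over a field, you need a filtered-colimit argument. The KABI comparison is only available in the finite-type setting.
\end{itemize}
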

In summary, this section has produced the following Lemma.
\begin{lemma} \label{lemma: ker coker implies not K regular}
    Let $X$ be an affine toric variety. If $\coker(\beta_X^q) \neq 0$ then $X$ is not $K_q$-regular, and if $\ker(\beta_X^q) \neq 0$ then $X$ is not $K_{q+1}$-regular.
\end{lemma}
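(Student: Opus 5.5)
Since $X$ is affine, we set $R = k[S_\sigma]$, so $X = \Spec R$ and $H^p_{\zar}(X,-)$ vanishes for $p>0$ on quasi-coherent sheaves. The relevant groups are therefore global sections, and $(\beta_X^q)_*$ is just the map $\beta_X^q$ on $H^0$. By Lemma \ref{lemma: sheafification is beta} (with $p=0$), $\coker(\beta_X^q) \cong \coker((a_X^q)_*)$ and $\ker(\beta_X^q)\cong\ker((a_X^q)_*)$. The plan is to feed these identifications into Theorem \ref{thm:NK_groups}: a nonzero summand of $NK_q(R)$ (respectively $NK_{q+1}(R)$) shows that $K_q(R)\to K_q(R[t])$ (respectively $K_{q+1}$) is not an isomorphism. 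By definition $X$ is then not $K_q$-regular (respectively not $K_{q+1}$-regular).

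The point requiring care is that Theorem \ref{thm:NK_groups} uses differentials relative to $\QQ$, whereas $\Omega_X^q$, $\dani_X^q$ and $\beta_X^q$ are defined relative to $k$. The first thing I would try is to reduce to $k$ algebraic over $\QQ$, or better, to compare the two directly. Write $\Omega_{R/\QQ}$ with the standard filtration coming from $\Omega_{k/\QQ}$. Since $R = k\otimes_{\QQ}\QQ[S_\sigma]$, we get
\[
\Omega^q_{R/\QQ}\cong\bigoplus_{i+j=q}\Omega^i_{k/\QQ}\otimes_{\QQ}\Omega^j_{\QQ[S_\sigma]/\QQ},
\]
and the same Künneth-type decomposition holds after cdh sheafification, because the cdh topology over $k$ is compatible with base change along the flat map $\QQ\to k$ for the toric cdh covers. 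Hence
\[
(a^q_{R/\QQ})_* \cong \bigoplus_{i+j=q}\Omega^i_{k/\QQ}\otimes (a^j_{\QQ[S_\sigma]})_*,
\]
and the $i=0$ summand is $(a^q)_*$ for $\QQ[S_\sigma]$. Together with base change $k\otimes_\QQ$ (which is faithfully flat and commutes with kernels and cokernels), this shows that nonvanishing of $\coker(\beta^q)$ over $k$ is equivalent to nonvanishing over $\QQ$, and that it then produces a nonzero direct summand of $\coker((a^q_{R/\QQ})_*)$. The same argument applies to kernels.

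Given this, the argument is as follows. If $\coker(\beta_X^q)\neq0$, then $\coker((a^q_R)_*)\neq 0$, so by Theorem \ref{thm:NK_groups} $NK_q(R)$ contains the nonzero summand $\coker((a_R^q)_*)\otimes t\QQ[t]$, and $X$ is not $K_q$-regular. If $\ker(\beta_X^q)\neq 0$, then $\ker((a^q_R)_*)\neq0$, and applying Theorem \ref{thm:NK_groups} with index $q+1$ shows that the summand $\ker((a_R^{q})_*)\otimes t\QQ[t]$ of $NK_{q+1}(R)$ is nonzero, so $X$ is not $K_{q+1}$-regular. The main obstacle is the $k$ versus $\QQ$ bookkeeping. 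If the Künneth argument for cdh sheaves is too delicate, I would instead use that $\dani$ and $\beta$ are defined over $\QQ$ and commute with flat base change to $k$, and restrict attention to the $\Omega^0_{k/\QQ}$ graded piece.
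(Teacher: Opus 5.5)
Your proposal is correct and follows essentially the same route as the paper. Both arguments identify $\ker(\beta^q_X)$ and $\coker(\beta^q_X)$ with $\ker((a^q_X)_*)$ and $\coker((a^q_X)_*)$ via Lemma~\ref{lemma: sheafification is beta} at $p=0$, pass from differentials over $k$ to differentials over $\QQ$ by flat base change from the $\QQ$-form of the toric variety plus a K\"unneth decomposition, and then read off a nonzero summand of $NK_q$ or $NK_{q+1}$ from Theorem~\ref{thm:NK_groups}. The only difference is that you spell out the K\"unneth step directly on K\"ahler differentials, whereas the paper cites the K\"unneth formula for Hochschild homology and the discussion in \cite{chww-toric}.
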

\begin{proof}
    Assume that $\ker(\beta^q_X)\neq 0$ (or $\coker(\beta^q_X)\neq 0$). By Lemma \ref{lemma: sheafification is beta} with $p=0$, $\ker(\beta^q_X) = \ker(a^q_X)\neq 0$ ($\coker(\beta^q_X) = \coker(a^q_X)\neq 0$, respectively). Here differentials are taken relative to $k$. Since the toric variety is base changed from that described by the same fan over $\QQ$, flat base change and the K\"unneth formula for Hochschild homology imply that the corresponding groups relative to $\QQ$ are also non-zero. (Compare the discussion preceding \cite[Cor.\,4.5]{chww-toric}.) Thus the assertion of the lemma follows from Theorem \ref{thm:NK_groups}.
\end{proof}

\section{Structural Lemmas}\label{sec:structural}

In this section we examine how the comparison map interacts with toric open immersions, products and closed immersions. For the sake of generality and consistency, we continue to view the comparison map as a morphism of sheaves, rather than modules, although all of the varieties considered henceforth are indeed affine.

In this section we will consider $\Omega_X^q$ and $\dani_X^q$ as contravariant functors from toric varieties to $k$-vector spaces, with the obvious induced maps. To make this clearer, we write $\Omega^q(X) = \Omega_X^q$ and $\dani^q(X) =\dani_X^q$. In this language, $\beta^q$ is a natural transformation from $\Omega^q$ to $\dani^q$. It is clear that the dashed arrows in the diagram below can be filled uniquely,

\begin{equation} \label{diagram: functoriality}
\begin{tikzcd}
	{\ker(\beta^q)(Y)} & {\Omega^q(Y)} & {\dani^q(Y)} & {\coker(\beta^q)(Y)} \\
	{\ker(\beta^q)(X)} & {\Omega^q(X)} & {\dani^q(X)} & {\coker(\beta^q)(X)}
	\arrow[from=1-1, to=1-2]
	\arrow[dashed, from=1-1, to=2-1]
	\arrow["{\beta^q(Y)}", from=1-2, to=1-3]
	\arrow[from=1-2, to=2-2]
	\arrow[from=1-3, to=1-4]
	\arrow[from=1-3, to=2-3]
	\arrow[dashed, from=1-4, to=2-4]
	\arrow[from=2-1, to=2-2]
	\arrow["{\beta^q(X)}", from=2-2, to=2-3]
	\arrow[from=2-3, to=2-4].
\end{tikzcd}\end{equation}

These maps make the assignments $\ker(\beta^q): X \mapsto \ker(\beta_X^q)$ and $\coker(\beta^q): X \mapsto \coker(\beta_X^q)$ into functors.

\begin{lemma}\label{lemma: closed lifting}
    Let $X = U(\sigma,N)$ where $\sigma$ is a cone in $N$ and let $V = V_\sigma(\rho)$ for some cone $\rho \in \Delta$. Let $q$ be an integer. Then for $F = \ker(\beta^q)$ or $\coker(\beta^q)$, the vector space $F(V)$ is a direct summand of $F(X)$.
\end{lemma}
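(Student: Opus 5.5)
Given the closed immersion $i\colon V = V_\sigma(\rho) \hookrightarrow X$ (here $\rho<\sigma$), the plan is to produce a toric retraction-type map in the opposite direction and use functoriality of $F$. On coordinate rings, $i^*\colon k[S_\sigma]\to k[S_\sigma\cap\rho^\perp]$ sends $\chi^m\mapsto\chi^m$ if $m\in\rho^\perp$ and to $0$ otherwise. In the other direction, the inclusion of monoids $S_\sigma\cap\rho^\perp\subset S_\sigma$ gives a ring map $s\colon k[S_\sigma\cap\rho^\perp]\to k[S_\sigma]$, i.e.\ a morphism $\pi\colon X\to V$. Since $\rho^\perp\cap\sigma\dual$ is a face of $\sigma\dual$, the complement $S_\sigma\setminus\rho^\perp$ is an ideal. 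Hence $i^*\circ s=\id$, which means $\pi\circ i=\id_V$. Both maps are $M$-graded.

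By functoriality we get $F(i)\circ F(\pi)=F(\pi\circ i)=\id_{F(V)}$, which exhibits $F(V)$ as a direct summand of $F(X)$. The catch is that $F=\ker(\beta^q)$ and $\coker(\beta^q)$ were only made functorial via diagram~\eqref{diagram: functoriality}, which requires $\Omega^q$ and $\dani^q$ to be functorial along the maps in question, with $\beta^q$ natural. For $\Omega^q$ this is automatic, because Kähler differentials are functorial for any ring map. So the work lies with $\dani^q$ and $\beta^q$.

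To handle this, I will describe both maps on $\dani^q$ degreewise using the explicit formula $\dani^q_X(m)=\chi^m\otimes\bigwedge^q V_{\Gamma(m)}$, where $\Gamma(m)$ is the smallest face of $\sigma\dual$ containing $m$. For $i^*$ I take $\chi^m\otimes\omega\mapsto\chi^m\otimes\omega$ when $m\in\rho^\perp$, and $0$ otherwise. For $m\in\rho^\perp\cap\sigma\dual$ the minimal face $\Gamma(m)$ is the same whether computed in $\sigma\dual$ or in the face $\sigma\dual\cap\rho^\perp$ (the dual cone of $V$, viewed in the lattice $M\cap\rho^\perp$), so $V_{\Gamma(m)}$ is the same space. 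For $\pi^*$ I take the inclusion $\chi^m\otimes\omega\mapsto\chi^m\otimes\omega$ in each degree $m\in\rho^\perp\cap\sigma\dual$. Both maps are well defined and module-compatible; for $\pi^*$, module compatibility follows from the inclusion $V_{\Gamma(m)}\subset V_{\Gamma(m+m')}$. Their composite $i^*\circ\pi^*$ is visibly the identity.

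Naturality of $\beta^q$ is then checked on generators $\chi^{m_0}d\chi^{m_1}\wedge\cdots\wedge d\chi^{m_q}$, using the formula of Lemma~\ref{lemma: beta}. Under $\pi^*$ both sides are just reinterpreted in $X$. Under $i^*$, the left side vanishes unless every $m_j\in\rho^\perp$, and the right side vanishes unless $\sum m_j\in\rho^\perp$. These conditions agree because all $m_j$ lie in $\sigma\dual$ and $\rho^\perp\cap\sigma\dual$ is a face. The main obstacle is making sure the maps on $\dani^q$ agree with the maps induced from the defining sequence \eqref{eq:exact_1}, which matters if $F$ is meant to use those canonical maps. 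If needed, I will sidestep this by defining $F(i)$ and $F(\pi)$ directly from the degreewise maps above. The splitting argument only needs them to be compatible with $\beta^q$ and to compose to the identity.
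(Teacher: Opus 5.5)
Your proposal is correct and is essentially the paper's proof. The paper also uses the retraction $p\colon X\to V$ induced by $N\twoheadrightarrow N/N_\rho$, which is the same as your $\pi$ coming from $S_\sigma\cap\rho^\perp\subset S_\sigma$; it notes $p\circ i=\id_V$ and concludes by functoriality, while your degreewise check that $\dani^q$ is functorial along the non-toric immersion $i$ and that $\beta^q$ is natural spells out what the paper leaves implicit as ``the obvious induced maps''.
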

\begin{proof}
    Let $i\coloneq i_\rho: V \to X$ be the inclusion map, and $p: X \to V$ the projection map, corresponding to the projection $N \twoheadrightarrow N(\rho) \coloneq N / N_\rho$. Note that $p \circ i = \id_V$. By functoriality, 
    \begin{equation}
        F(i) \circ F(p) = F(p \circ i) = F(\id_V) = \id_{F(V)},
    \end{equation}
    so $F(i)$ is a projection, for $F = \ker(\beta^q)$ or $\coker(\beta^q)$. 
\end{proof}

\begin{lemma} \label{lemma: product}
    If $X$ is a product of toric varieties, with $Y$ as a factor, and $q$ is an integer, then for $F = \ker(\beta^q)$ or $\coker(\beta^q)$, the vector space $F(Y)$ is a direct summand of $F(X)$.
\end{lemma}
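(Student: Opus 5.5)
The plan is to repeat the retraction argument of Lemma \ref{lemma: closed lifting}, now using the product structure in place of an orbit closure. Write $X = Y \times Z$, where $Y = U(\sigma_1,N_1)$ and $Z = U(\sigma_2,N_2)$ are toric varieties, so that $X = U(\sigma_1\times\sigma_2, N_1\oplus N_2)$. It suffices to find toric morphisms $i: Y \to X$ and $p: X \to Y$ with $p\circ i = \id_Y$. Once we have them, functoriality of $F = \ker(\beta^q)$ or $\coker(\beta^q)$, as set up in diagram \eqref{diagram: functoriality}, gives $F(i)\circ F(p) = F(\id_Y) = \id_{F(Y)}$. Hence $F(p)$ embeds $F(Y)$ as a direct summand of $F(X)$, with complement $\ker F(i)$.

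To construct the maps, take $p$ to be the projection $Y\times Z \to Y$. It is toric: it comes from the lattice projection $N_1\oplus N_2 \to N_1$, which carries $\sigma_1\times\sigma_2$ onto $\sigma_1$. On coordinate rings it is the inclusion $k[S_{\sigma_1}] \to k[S_{\sigma_1}]\otimes k[S_{\sigma_2}]$. For $i$, pick a torus-fixed point of $Z$, or more generally a point in the closed orbit, and set $i = \id_Y \times \{z_0\}$.

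A cleaner way to choose $z_0$ is to reduce first to the case where $\sigma_2$ is full-dimensional. In general $Z \cong T' \times U(\sigma_2', N_2')$ with $T'$ a torus. Take $z_0$ to be the identity point of $T'$ times the distinguished fixed point of $U(\sigma_2',N_2')$. The latter corresponds to the monoid map $S_{\sigma_2'} \to k$ sending $\chi^m$ to $1$ if $m=0$ and to $0$ otherwise; this is a ring map because $\sigma_2'^{\vee}$ is strongly convex. Then $i^*$ is the monoid-algebra map induced by a monoid homomorphism $S_{\sigma_1\times\sigma_2} \to S_{\sigma_1}\cup\{0\}$, and $p\circ i = \id_Y$ is immediate.

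The main obstacle is to make sure $i$ really induces maps on both $\Omega^q$ and $\dani^q$ that commute with $\beta^q$. This matters because the functoriality in diagram \eqref{diagram: functoriality} was asserted for toric morphisms, and $\dani^q$ is not obviously functorial under arbitrary morphisms. There are two ways to handle this, and I would use whichever fits the paper's conventions.

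\begin{itemize}
\item Observe that $i$ factors as the orbit-closure inclusion $Y\times V(\sigma_2) \hookrightarrow X$ followed by a torus translation. This reduces the compatibility to the situation already used in Lemma \ref{lemma: closed lifting}. In fact $Y\times\{z_0\}$ is exactly $V_{\sigma_1\times\sigma_2}(\{0\}\times\sigma_2')$ when $Z$ has no torus factor.
\item Verify directly, degree by degree using the explicit description of $\dani^q_X(m)$ in terms of $\Gamma(m)$, that $i^*$ and $p^*$ send $\chi^m\otimes\omega$ to $\chi^{m'}\otimes\omega'$ compatibly with the formula for $\beta^q$ in Lemma \ref{lemma: beta}.
\end{itemize}

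In fact, when $Z$ has no torus factor, the statement follows directly from Lemma \ref{lemma: closed lifting}, since $Y$ is then an orbit closure of $X$. So the only genuinely new case is the torus factor. There we use the section $Y \to Y\times T'$ given by the identity point of $T'$. This section is toric, coming from the inclusion of lattices $N_1 \to N_1\oplus N'$ that maps $\sigma_1$ into $\sigma_1\times\{0\}$. It is therefore covered by the same functoriality.
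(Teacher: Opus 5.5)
Your proposal is correct and follows the paper's own proof: construct a section $i$ and the projection $p$ with $p\circ i=\id_Y$, then apply functoriality of $\ker(\beta^q)$ and $\coker(\beta^q)$. The paper just asserts that $i$ and $p$ exist, so your explicit $i$ fills in that detail: the orbit-closure inclusion of Lemma \ref{lemma: closed lifting} when $Z$ has no torus factor, and the identity-point section for a torus factor, with the check that these are maps along which $\dani^q$ and $\beta^q$ are functorial.
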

\begin{proof}
    Since $Y$ is a factor of $X$, there exists an inclusion $i: Y \to X$ and a projection $p: X \to Y$ such that $p \circ i = \id_Y$. By functoriality,  
    \begin{equation}
        F(i) \circ F(p) = F(p \circ i) = F(\id_Y) = \id_{F(Y)},
    \end{equation}
    so $F(i)$ is a projection, for $F = \ker(\beta^q)$ or $\coker(\beta^q)$.
\end{proof}

\begin{lemma} \label{lemma: open lifting}
    Let $X = U(\sigma,N)$ where $\sigma$ is a strongly convex cone in $N$, and let $Y = U(\tau,N_{\tau})$, where $\tau < \sigma$ is a face. Then for $F = \ker(\beta^q)$ or $\coker(\beta^q)$, with any integer $q$, $F(X) = 0$ implies $F(Y) = 0$.
\end{lemma}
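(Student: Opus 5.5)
The approach combines the open immersion of the face with the product decomposition, and reduces the claim to a localization statement. Let $U_\tau = U(\tau,N)$ be the open toric subvariety of $X$ corresponding to the face $\tau$. Concretely, $k[S_\tau]$ is the localization $k[S_\sigma][\chi^{-m_0}]$, where $m_0 \in \sigma\dual \cap M$ is chosen with $\tau = \sigma \cap m_0^\perp$.

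\textbf{Step 1: splitting off $Y$.} Choose a splitting $N = N_\tau \oplus N'$ of lattices. This is possible because $N/N_\tau$ is torsion-free, $N_\tau$ being saturated. With this splitting,
\begin{equation*}
U(\tau,N) \cong U(\tau,N_\tau) \times T_{N'} = Y \times T_{N'},
\end{equation*}
where $T_{N'}$ is the torus of $N'$. By Lemma \ref{lemma: product}, $F(Y)$ is a direct summand of $F(U_\tau)$. It therefore suffices to show that $F(X)=0$ implies $F(U_\tau)=0$.

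\textbf{Step 2: compatibility with localization.} Both $\Omega^q$ and $\dani^q$ are quasi-coherent (indeed coherent) sheaves, so their sections on $U_\tau$ are obtained by inverting $\chi^{m_0}$:
\begin{equation*}
\Omega^q(U_\tau) = \Omega^q(X)_{\chi^{m_0}}, \qquad \dani^q(U_\tau) = \dani^q(X)_{\chi^{m_0}}.
\end{equation*}
For $\Omega^q$ this is the standard fact that Kähler differentials commute with localization. For $\dani^q$ it holds because \eqref{eq:exact_1} is an exact sequence of quasi-coherent sheaves on $X$, and restricting the orbit closures $V_\Delta(\rho)$ to $U_\tau$ gives exactly the rays of $\tau$.

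Next, $\beta^q$ is a morphism of $\cO_X$-modules. This follows from its formula in Lemma \ref{lemma: beta}: multiplying the source by $\chi^{m'}$ adds $m'$ to the degree, which matches multiplication by $\chi^{m'}$ on the target. Hence $\beta^q_{U_\tau}$ is the localization of $\beta^q_X$ at $\chi^{m_0}$.

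Localization is exact, so
\begin{equation*}
\ker(\beta^q)(U_\tau) = \ker(\beta^q)(X)_{\chi^{m_0}} \quad\text{and}\quad \coker(\beta^q)(U_\tau) = \coker(\beta^q)(X)_{\chi^{m_0}}.
\end{equation*}
In particular, $F(X)=0$ implies $F(U_\tau)=0$. Combined with Step 1, this gives $F(Y)=0$.

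\textbf{Main obstacle.} The main care needed is checking that the functorial maps in \eqref{diagram: functoriality} for the open immersion $U_\tau \to X$ and for the product inclusion and projection really are the restriction and localization maps used above. One also needs the identification of $\dani^q$ on $U_\tau$ with $\dani^q(X)_{\chi^{m_0}}$ to be compatible with $\beta$. This can be verified degree-by-degree using the explicit description of $\dani^q_X(m)$ via $\Gamma(m)$: the faces $\Gamma(m)$ for $m \in \tau\dual$ agree whether computed in $\sigma$'s localized monoid or in $\tau\dual$.
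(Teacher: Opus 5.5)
Your proof is correct and follows the paper's argument. Both pass through the intermediate open subvariety $U(\tau,N)\cong T_{N(\tau)}\times U(\tau,N_\tau)$, use that the kernel and cokernel of $\beta^q$ are compatible with localization so that $F(X)=0$ forces $F(U(\tau,N))=0$, and then split off $Y$ as a direct summand via Lemma \ref{lemma: product}; your version only makes explicit what the paper leaves implicit, namely localizing at a single $\chi^{m_0}$, the $\mathcal{O}_X$-linearity of $\beta^q$, and exactness of localization.
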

\begin{proof}
    The proof proceeds via an intermediate variety, $U \coloneq U(\tau, N)$. There is a standard open immersion $j:U(\tau,N) \to X$ given by localisation at $k[M(\tau)]$, where $M(\tau) \coloneq \tau^\perp \cap M$. Consequently $U(\tau,N) \cong T_{N(\tau)} \times_{k} U(\tau,N_\tau)$, where $T_{N(\tau)}$ denotes the torus with character lattice $N(\tau) \coloneq N / N_\tau$.
    
    Since $U$ and $X$ are affine and  $\ker(\beta_X^q)$ and $\coker(\beta^q_X)$ are coherent sheaves on $X$, $F(U) = k[M(\tau)] \otimes F(X)$ for $F = \ker(\beta^q)$ and $\coker(\beta^q)$. Therefore, if $F(X) = 0 $ then $F(U) = 0$.

    Since $Y = U(\tau, N_\tau)$ is a factor of the product variety $U$, Lemma \ref{lemma: product}, $F(Y)$ is a direct summand of $F(U)$. Hence if $F(X)=0$ then $F(Y)=0$.
\end{proof}

\begin{lemma}\label{lemma: torsion-kernel}
    Let $X = U(\sigma,N)$ be an affine toric variety. Then for each integer $q$,
    \begin{equation}
        \ker(\beta^q)(X) = \tors \Omega^q(X)
    \end{equation}
\end{lemma}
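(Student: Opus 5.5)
We prove the two inclusions separately, working degree by degree in the $M$-grading.

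\emph{The inclusion $\tors \Omega^q(X) \subseteq \ker(\beta^q)(X)$.} The target $\dani^q(X)$ is torsion-free. Indeed, by the defining sequence (\ref{eq:exact_1}) it is a submodule of the free module $k[S_\sigma]\otimes \bigwedge^q(M)$. Alternatively, by the explicit description $\dani^q_X(m) = \chi^m\otimes\bigwedge^q(V_{\Gamma(m)})$, multiplication by a nonzero monomial $\chi^{m'}$ carries degree $m$ injectively into degree $m+m'$, because $\Gamma(m)\subseteq\Gamma(m+m')$ and so $V_{\Gamma(m)}\subseteq V_{\Gamma(m+m')}$. Since $k[S_\sigma]$ is a domain and $\beta^q_X$ is $k[S_\sigma]$-linear, any torsion element $\omega$ satisfies $f\beta(\omega)=\beta(f\omega)=0$ for some $f\neq 0$. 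Torsion-freeness of $\dani^q(X)$ then forces $\beta(\omega)=0$.

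\emph{The inclusion $\ker(\beta^q)(X) \subseteq \tors \Omega^q(X)$.} Let $\omega\in\ker\beta^q_X$ and localise at the generic point, i.e.\ restrict to the open torus $T=U(\{0\},N)\subset X$. This is the open immersion $j$ used in Lemma \ref{lemma: open lifting} with $\tau=0$, corresponding to inverting all monomials (equivalently, localising at $\chi^{m_0}$ for $m_0$ in the interior of $\sigma\dual$). Because $T$ is smooth, Lemma \ref{lemma: beta} says $\beta^q_T$ is an isomorphism with inverse $\alpha^q_T$. Naturality of $\beta^q$ gives $\beta^q_T(j^*\omega)=j^*\beta^q_X(\omega)=0$, so $j^*\omega=0$ in $\Omega^q(T)=\Omega^q(X)[\chi^{-m_0}]$. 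Hence $\chi^{km_0}\omega=0$ for some $k\ge 0$, and $\omega$ is torsion.

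The only point needing care is the identification $\Omega^q(T)=\Omega^q(X)\otimes_{k[S_\sigma]} k[M]$, that is, compatibility of Kähler differentials with localisation. This is standard, as $k[M]$ is the localisation of $k[S_\sigma]$ at the monomial $\chi^{m_0}$. One also needs $\beta$ to commute with this restriction, which is immediate from the formula in Lemma \ref{lemma: beta}. I expect no real obstacle: the whole argument reduces to torsion-freeness of $\dani^q$ together with $\beta$ being an isomorphism on the torus.
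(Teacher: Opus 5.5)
Your proof is correct and is essentially the paper's argument. Both rest on two ingredients: naturality of $\beta^q$ together with $\beta^q_T$ being an isomorphism on the open torus, and the fact that $\dani^q(X)$ sits inside the free module $k[S_\sigma]\otimes\bigwedge^q(M)$. The difference is only organisational: the paper proves $\ker(\beta^q)(X)=\ker(\Omega^q(j))$ via injectivity of $\dani^q(j)$ and then identifies that kernel with the torsion, whereas you obtain $\tors\Omega^q(X)\subseteq\ker(\beta^q)(X)$ directly from torsion-freeness of $\dani^q(X)$, which spares you having to argue that every torsion element, not just monomial torsion, vanishes on the torus.
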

\begin{proof}
    Let $j: U \coloneq \mathbf{T}_N \hookrightarrow X$ denote the canonical open immersion of the maximal torus of $X$, which is induced by the inclusion $S_\sigma \hookrightarrow M$. By functoriality, we have the following commutative diagram,
\[\begin{tikzcd}
	{\Omega^q(U)} & {\dani^q(U)} \\
	{\Omega^q(X)} & {\dani^q(X)}
	\arrow["{\beta_U^q}", from=1-1, to=1-2]
	\arrow["{\Omega^q(j)}", from=2-1, to=1-1]
	\arrow["{\beta_X^q}", from=2-1, to=2-2]
	\arrow["{\dani^q(j)}"', from=2-2, to=1-2]
\end{tikzcd}\]
    
    Now, since $U$ is smooth, $\beta_U^q$ is an isomorphism. The injectivity of $\beta_U^q$ in the diagram implies that $\ker(\beta^q)(X) \subseteq \ker(\Omega^q(j))$. By the definition of $\dani^q(X)$ in Equation \ref{eq:exact_1}, $\dani^q(X)$ is a subgroup of $k[S_\sigma] \otimes \bigwedge^q(M)$. The map $j^*:k[S_\sigma] \to k[M]$ is injective because $k[S_\sigma]$ is torsion-free, while the map $\bigwedge^q(M) \to \bigwedge^q(M)$ corresponding to $j$ is simply the identity, so the map $k[S_\sigma] \otimes \bigwedge^q(M) \to k[M ] \otimes \bigwedge^q (M)$ is injective. Thus $\dani^q(j)$ is injective. Therefore $\ker(\beta^q)(X) = \ker(\Omega^q(j))$. Since $\Omega^q(j)$ is given by localisation at all of $k[M]$, $\ker(\Omega^q(j)) = \tors \Omega^q(j)$.
\end{proof}
\section{$K_1$-regularity and simplicial cones}\label{sec:simplicial}

\begin{definition}
For a monoid $K$ such that $K \cap (-K) = \{ O \}$, where $O$ is the neutral element of $K$, we write $a \leq b$ for $a,b \in K$ to denote the partial order
\begin{equation}
	\exists c \in K \textst a + c = b.
\end{equation}
\end{definition}

\begin{definition} \label{definition: hilbert basis}
For a subset $S$ of a monoid $K$ such that $K \cap (-K) = \{ O \}$, we denote by $\Hilb(S)$ the set of minimal elements with respect to $\leq$, also called the \emph{Hilbert basis} of $S$,
\begin{equation}
\Hilb(S) \coloneq \{x \in S \backslash \{ O \} \mid \forall y \in S, y \leq x \implies y = x \}.
\end{equation}
\end{definition}

The monoid $S_\sigma$ satisfies the condition to have the partial order $\leq$ if and only if $\sigma\dual$ is strongly convex, or equivalently that $\sigma$ is full-dimensional. For brevity, we write $H_\sigma \coloneq \Hilb(S_\sigma)$ for a cone $\sigma$ in the lattice $N$, and if $\theta < \sigma\dual$ is a dual face, we write $\Hilb(\theta) \coloneq \Hilb({\theta \cap M})$.

\begin{lemma} \label{lemma: hilbert basis face}
If $\sigma$ is a strongly convex cone in the lattice $N$ and $\theta < \sigma\dual$, then $\Hilb(\theta) = \theta \cap H_{\sigma }$. 
\end{lemma}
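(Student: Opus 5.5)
The key fact I will use is that a face $\theta$ of $\sigma\dual$ is an extremal subset of the monoid $S_\sigma$: if $a,b \in S_\sigma$ and $a+b \in \theta$, then $a,b \in \theta$. To see this, choose $n \in \sigma$ with $\theta = \sigma\dual \cap n^\perp$. Then $\langle a,n\rangle \geq 0$, $\langle b,n\rangle \geq 0$ and $\langle a+b,n\rangle = 0$, which forces $\langle a,n\rangle = \langle b,n\rangle = 0$. Here $\sigma\dual$ is strongly convex, as the paper implicitly requires for the partial order $\leq$ to be defined on $S_\sigma$. Hence $\theta \cap M$ also satisfies $K \cap (-K) = \{O\}$, and the order $\leq$ on $\theta\cap M$ is defined. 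A priori it could differ from the restriction of the order on $S_\sigma$.

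The first step is to show that the two orders agree on $\theta \cap M$. Suppose $x,y \in \theta\cap M$ and $y \leq x$ in $S_\sigma$, say $y + c = x$ with $c \in S_\sigma$. Extremality gives $c \in \theta$, so $y \leq x$ holds in $\theta\cap M$. The converse is trivial because $\theta \cap M \subset S_\sigma$.

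Next I prove the inclusion $\theta \cap H_\sigma \subseteq \Hilb(\theta)$. Take $x \in \theta \cap H_\sigma$; it is nonzero and lies in $\theta\cap M$. If $y \in \theta \cap M$ satisfies $y \leq x$ in $\theta\cap M$, then $y \leq x$ in $S_\sigma$, and minimality of $x$ in $S_\sigma$ gives $y = x$. So $x \in \Hilb(\theta)$.

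For the reverse inclusion, take $x \in \Hilb(\theta)$ and $y \in S_\sigma$ with $y \leq x$, say $y + c = x$ with $c\in S_\sigma$. Extremality puts both $y$ and $c$ in $\theta$, so $y \leq x$ in $\theta \cap M$, and minimality gives $y = x$. Hence $x \in H_\sigma$, and it lies in $\theta$.

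A literal reading of Definition \ref{definition: hilbert basis} makes "minimal" degenerate, since $O \leq x$ for every $x$. The intended meaning is surely that $x$ admits no decomposition $x = y + c$ with $y$ and $c$ both nonzero. Under that reading, the argument above is unchanged: extremality ensures that any decomposition of an element of $\theta$ inside $S_\sigma$ already takes place inside $\theta$. The only substantive input is therefore the extremality of faces, which I do not expect to cause difficulty.
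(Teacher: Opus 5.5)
Your proof is correct and follows the same route as the paper. Both arguments rest on the extremality of $\theta$ in $S_\sigma$, obtained from a vector $n \in \sigma$ with $\theta = \sigma\dual \cap n^\perp$; your explicit check that the orders on $\theta \cap M$ and $S_\sigma$ agree only spells out what the paper's line ``$a \leq b \in \theta$ implies $a \in \theta$'' leaves implicit. Your remark that $\Hilb$ must be read as minimality among nonzero elements (irreducibility) is also correct, and the argument is unaffected by it.
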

\begin{proof}
If $\theta < \sigma\dual$ then there is some dual vector $n \in \sigma$ such that $\theta = n^\perp \cap \sigma\dual$. Hence if $a, b \in \sigma\dual$ and $a + b \in \theta$ then since $\langle a,n\rangle, \langle b,n \rangle \geq 0$ we have 
\begin{align}
0 = \langle a + b, n \rangle = \langle a, n \rangle + \langle b, n \rangle \\
\implies \langle a, n \rangle = \langle b, n \rangle = 0 \implies a,b \in \theta.
\end{align} 
Thus $a \leq b \in \theta $ implies $a \in \theta $, so $\Hilb(\theta) \subseteq \theta \cap H_\sigma$. Conversely, if $a \in \theta \cap H_\sigma$ then $\forall b \in \theta \cap M \subset S_\sigma$ we have $b \leq a \implies b=a$, so $\theta \cap H_\sigma \subseteq \Hilb(\theta)$.

\end{proof}

\begin{definition}
If $\sigma$ is a full-dimensional cone in the lattice $N$ and $\theta < \sigma$, we denote by $\Rays(\theta)$ the set of ray generators, 

\begin{equation}
    \Rays(\theta) \coloneq \bigsqcup_{\eta \in \theta(1)} \Hilb(\eta),
\end{equation}

Note that for any ray, $|\Hilb(\eta)| = 1$. For brevity, we write $R_\sigma = \Rays(\sigma\dual)$, which has cardinality $|\sigma\dual(1)|$. 
\end{definition}

 By Lemma \ref{lemma: hilbert basis face}, $\Hilb(\eta) \subseteq H_\sigma$ for each $\eta \in \sigma\dual(1)$, so $R_\sigma \subseteq H_\sigma$, but it might not hold that $R_\sigma = H_\sigma$. We will prove Theorem \ref{thm:K1} using the fact that $|H_\sigma \backslash R_\sigma|$ distinguishes simplicial cones from smooth cones and is detected by $\coker(\beta_X^1)$.

\begin{lemma}\cite[Lemma 1.3.10]{cox_little_schenck} \label{lem:simplicial_iff_hilbert}
	For a simplicial lattice cone $(\sigma,N)$, the affine toric variety $U(\sigma,N)$ is smooth if and only if $H_\sigma = R_\sigma$. 
\end{lemma}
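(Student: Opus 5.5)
The plan is to reduce to a statement about the dual monoid $S_\sigma$ and compare its Hilbert basis with its ray generators. As in the surrounding discussion, $H_\sigma$ only makes sense when $\sigma$ is full-dimensional, so I assume this; in general one would first use $U(\sigma,N)\cong T_{N(\sigma)}\times U(\sigma,N_\sigma)$ to reduce to that case. Write $d=\rank N$. Since $\sigma$ is simplicial and full-dimensional, $\sigma\dual$ is also simplicial and full-dimensional, with exactly $d$ rays. Their primitive generators $R_\sigma=\{m_1,\dots,m_d\}$ are $\RR$-linearly independent. Recall that $U(\sigma,N)$ is smooth exactly when $\sigma$ is generated by part of a $\ZZ$-basis of $N$. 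For full-dimensional $\sigma$ this is equivalent to $m_1,\dots,m_d$ forming a $\ZZ$-basis of $M$, because the dual basis then generates $\sigma$. So it suffices to show that $H_\sigma=R_\sigma$ if and only if $\{m_i\}$ is a $\ZZ$-basis of $M$.

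For the forward direction, suppose $\{m_i\}$ is a $\ZZ$-basis. Then $S_\sigma=\sigma\dual\cap M=\bigoplus_i \ZZ_{\ge 0}m_i\cong\ZZ_{\ge0}^d$. In this monoid the minimal nonzero elements for $\leq$ are exactly the $m_i$, so $H_\sigma=R_\sigma$.

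For the converse, the first step is to show that the Hilbert basis generates $S_\sigma$ as a monoid. Choose $n$ in the interior of $\sigma$ with $n\in N$. Then $\langle s,n\rangle$ is a positive integer for every $s\in S_\sigma\setminus\{O\}$. Induct on this value: a nonzero $s$ that is not in $H_\sigma$ has some $y\le s$ with $y\neq s$, and we may take $y\neq O$ as well. Writing $s=y+c$ with $y,c\in S_\sigma\setminus\{O\}$, both summands have strictly smaller pairing with $n$, so each is a sum of Hilbert basis elements by induction. This well-foundedness is the only point requiring care, and I expect it to be the main (mild) obstacle.

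Given $H_\sigma=R_\sigma$, every $s\in S_\sigma$ is therefore a $\ZZ_{\ge0}$-combination of the $m_i$. It remains to show that the $m_i$ generate $M$ as a group. Take any $m\in M$. For a large integer $t$, the element $m+t(m_1+\cdots+m_d)$ lies in $\sigma\dual$, since its coordinates in the $\RR$-basis $\{m_i\}$ become nonnegative. Hence it lies in $S_\sigma$ and is an integer combination of the $m_i$, and therefore so is $m$. The $m_i$ are linearly independent and $d$ in number, so they form a $\ZZ$-basis of $M$. By the first paragraph, $U(\sigma,N)$ is smooth.
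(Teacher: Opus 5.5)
Your proof is correct. The paper gives no argument of its own here; it only cites \cite[Lemma 1.3.10]{cox_little_schenck}. The natural route from that source goes through the Zariski tangent space at the torus-fixed point $p$ of $U(\sigma,N)$, for $\sigma$ full-dimensional:
\begin{itemize}
\item the maximal ideal of $p$ is spanned by the $\chi^m$ with $m\neq O$, and its square by the $\chi^{a+b}$ with $a,b\neq O$;
\item so the cotangent space at $p$ has basis $\{\chi^h : h\in H_\sigma\}$, and $U(\sigma,N)$ is smooth if and only if $|H_\sigma|=d$;
\item smoothness at $p$ suffices because the singular locus is closed and torus-stable, and every orbit closure contains $p$;
\item for simplicial $\sigma$ one has $R_\sigma\subseteq H_\sigma$ and $|R_\sigma|=d$, so $|H_\sigma|=d$ if and only if $H_\sigma=R_\sigma$.
\end{itemize}

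You instead take the criterion ``$U(\sigma,N)$ is smooth iff $\sigma$ is generated by part of a $\mathbf{Z}$-basis of $N$'' as known. You then prove the purely monoid-theoretic equivalence ``$H_\sigma=R_\sigma$ iff $R_\sigma$ is a $\mathbf{Z}$-basis of $M$.'' This is cleanly done and makes the role of simpliciality transparent: it gives $|R_\sigma|=d$ and the linear independence of the $m_i$.

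It is not really more elementary, though. The standard proof of the criterion you invoke runs through exactly this tangent-space count. Its final step is essentially your converse argument: the Hilbert basis generates $S_\sigma$, and $S_\sigma$ generates $M$.

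Two small remarks. First, for $H_\sigma=R_\sigma\Rightarrow$ smooth you can skip both the group-generation step and the criterion. Once $R_\sigma$ generates $S_\sigma$ and is linearly independent, $S_\sigma=\bigoplus_i\mathbf{Z}_{\ge0}m_i$, so $k[S_\sigma]$ is a polynomial ring and $U(\sigma,N)\cong\mathbb{A}^d$. The criterion is only needed for smooth $\Rightarrow H_\sigma=R_\sigma$.

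Second, the paper's definition of $\Hilb$ literally quantifies over all $y\in S$, including $O$. Since $O\leq x$ always, that would make $\Hilb(S)$ empty. You are reading it, correctly, with $y\neq O$. Under that reading your phrase ``we may take $y\neq O$'' describes something automatic rather than a choice.
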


\begin{lemma} \label{lemma: coker1 = H less R}
	Let $X=U(\sigma,N)$ be an affine toric variety such that $\sigma$ is full-dimensional. If $R_{\sigma} \neq H_{\sigma}$ then $\coker(\beta^1_X) \neq 0$.
\end{lemma}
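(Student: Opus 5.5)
The plan is to exhibit a specific graded piece of $\dani^1_X$ that is not hit by $\beta^1_X$. Since $R_\sigma \neq H_\sigma$, choose $h \in H_\sigma \setminus R_\sigma$. Let $\theta = \Gamma(h)$ be the smallest face of $\sigma\dual$ containing $h$. Because $h$ is not a ray generator and lies in $H_\sigma$, it cannot lie on a ray; for if $h\in\eta$ with $\eta$ a ray, then $h$ would be a multiple of the generator $u_\eta$, and minimality would force $h=u_\eta\in R_\sigma$. Hence $\dim\theta\ge 2$, and also $h$ lies in the relative interior of $\theta$. By the description of Danilov's sheaf in degree $m$, we have $\dani^1_X(h) = \chi^h\otimes V_{\theta}$, where $V_\theta$ is the $k$-span of $\theta$; this space has dimension $\dim\theta \ge 2$.

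The main step is to compute the image of $\beta^1_X$ in degree $h$. The degree-$h$ part of $\Omega^1_X$ is spanned by elements $\chi^{m_0}d\chi^{m_1}$ with $m_0,m_1\in S_\sigma$ and $m_0+m_1 = h$. By Lemma \ref{lemma: beta}, such an element maps to $\chi^h\otimes m_1$. We now use that $h$ is a Hilbert basis element: $m_1\le h$ in $S_\sigma$ forces $m_1\in\{0,h\}$. (Full-dimensionality of $\sigma$ is exactly what makes $\le$ a partial order here.) If $m_1 = 0$, then $d\chi^{0} = d(1)=0$. If $m_1 = h$, then $m_0=0$ and the image is $\chi^h\otimes h$. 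Therefore the image of $\beta^1_X$ in degree $h$ is the line $k\cdot(\chi^h\otimes h)$, which is at most one-dimensional.

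Comparing dimensions, $\dim_k \dani^1_X(h)\ge 2 > 1\ge \dim_k \operatorname{im}(\beta^1_X)(h)$. Since $\beta^1_X$ is $M$-graded, its cokernel is graded too, and we conclude $\coker(\beta^1_X)(h)\neq 0$.

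I expect the delicate point to be the justification that $\Gamma(h)$ has dimension at least $2$. This requires care about rays: the generator of $\eta\cap M$ is the unique Hilbert element on $\eta$, by Lemma \ref{lemma: hilbert basis face} and the remark that $|\Hilb(\eta)|=1$. It also needs the small check that $h\ne 0$, which holds by definition of $\Hilb$. The rest is bookkeeping with the grading.
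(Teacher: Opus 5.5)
Your proof is correct and is essentially the paper's argument. You pick $h\in H_\sigma\setminus R_\sigma$, observe that $\dim\dani^1_X(h)=\dim\Gamma(h)\ge 2$ while $\Omega^1_X(h)$ is spanned by $d\chi^h$, and conclude that $\beta^1_X$ is not surjective in degree $h$; you also spell out two points the paper leaves implicit, namely why $h$ cannot lie on a ray and why irreducibility of $h$ forces the degree-$h$ piece of $\Omega^1_X$ to be one-dimensional.
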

\begin{proof}
	Suppose there exists $h \in H_\sigma \backslash R_\sigma$. Then $\dim \Gamma(h)\geq 2$, so 
	\begin{equation}
		\dim \dani^1_{X}(h) = \dim \Gamma(h) \geq 2.
	\end{equation}
	On the other hand, $\Omega^1_{X}(h) = \langle d\chi^h \rangle$,  whence $\dim \dani^1_{X}(h) - \dim \Omega^1_{X}(h) \geq 1$. Therefore, since $\beta_X^1$ is an $M$-graded linear map, so $\dim \coker(\beta^1_X)(h) \geq 1$.
\end{proof}

\begin{theorem} \label{thm:K1}
    Let $X = U(\sigma, N)$ be an affine toric variety over a field of characteristic zero. If $\sigma$ is simplicial, then $X$ is smooth if, and only if, $X$ is $K_1$-regular. 
\end{theorem}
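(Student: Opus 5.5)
The forward direction is classical: if $X$ is smooth, then $k[S_\sigma]$ is a regular ring, and regular rings are $K_n$-regular for every $n$ by the fundamental theorem of $K$-theory (homotopy invariance for regular schemes). In particular $X$ is $K_1$-regular. The content is therefore the converse. I will prove its contrapositive: if $\sigma$ is simplicial and $X$ is not smooth, then $\coker(\beta^1_X)\neq 0$. Lemma \ref{lemma: ker coker implies not K regular} with $q=1$ then shows that $X$ is not $K_1$-regular.

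The first step is to reduce to a full-dimensional cone. Let $N_\sigma\subseteq N$ be the sublattice spanned by $\sigma$; it is saturated, so we may choose a complement $N=N_\sigma\oplus N'$. Then
\[
X\cong U(\sigma,N_\sigma)\times T_{N'},
\]
and $\sigma$ is full-dimensional (and still simplicial) in $N_\sigma$. The torus factor is smooth, so $X$ is smooth if and only if $Y\coloneq U(\sigma,N_\sigma)$ is smooth. By Lemma \ref{lemma: product}, $\coker(\beta^1)(Y)$ is a direct summand of $\coker(\beta^1)(X)$, so it suffices to show $\coker(\beta^1_Y)\neq 0$. Alternatively, Lemma \ref{lemma: open lifting} applied with $\tau=\sigma$ gives the same reduction.

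The second step handles $Y$. Since $\sigma$ is simplicial and full-dimensional in $N_\sigma$ and $Y$ is not smooth, Lemma \ref{lem:simplicial_iff_hilbert} gives $H_\sigma\neq R_\sigma$. Lemma \ref{lemma: coker1 = H less R} then gives $\coker(\beta^1_Y)\neq 0$, which completes the argument.

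I expect the only delicate point to be the reduction step. One has to check that the splitting $N=N_\sigma\oplus N'$ really identifies $X$ with the product $Y\times T_{N'}$ of toric varieties. One also has to check that the smoothness criterion of Lemma \ref{lem:simplicial_iff_hilbert} is being applied in the full-dimensional setting, where the partial order $\leq$ and $H_\sigma$ are defined. Both follow from the saturation of $N_\sigma$ and the fact that $S_\sigma=(\sigma\dual\cap M_\sigma)\oplus M'$, where $M_\sigma$ and $M'$ are the lattices dual to $N_\sigma$ and $N'$ (so $M=M_\sigma\oplus M'$).
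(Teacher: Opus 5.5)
Your proposal is correct and is essentially the paper's own proof. Both take the forward direction as standard. For the converse, both split off the torus factor using Lemma \ref{lemma: product}, apply Lemma \ref{lem:simplicial_iff_hilbert} and Lemma \ref{lemma: coker1 = H less R} to get $\coker(\beta^1_X)\neq 0$, and conclude with Lemma \ref{lemma: ker coker implies not K regular}. Your extra checks (that $X$ is smooth iff $U(\sigma,N_\sigma)$ is, and the alternative reduction via Lemma \ref{lemma: open lifting} with $\tau=\sigma$) are valid; they only make explicit what the paper leaves implicit.
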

\begin{proof}
That `$X$ is smooth' $\implies$ `$X$ is $K_1$-regular' is a standard fact. We will prove the converse by assuming that $X$ is singular and $\sigma$ is simplicial, and deriving that $\coker(\beta^1_X) \neq 0$, which implies that $X$ is not $K_1$-regular by Lemma \ref{lemma: ker coker implies not K regular}. 

We can decompose $X$ as the product of the torus, $\mathbf{T}_{N(\sigma)}$, and the toric variety $X'= U(\sigma,N_{\sigma})$, so that $\sigma$ is full-dimensional in $N_\sigma$ and simplicial. By Lemma \ref{lemma: product}, it suffices to prove that $\coker(\beta_{X'}^1) \neq 0$, and so we may assume that $\sigma$ is full-dimensional in $N$ to begin with. 

Since $\sigma$ is singular and simplicial, Lemma \ref{lem:simplicial_iff_hilbert} implies that $H_{\sigma} \backslash R_{\sigma} \neq \emptyset$. Therefore $X$ satisfies the conditions of Lemma \ref{lemma: coker1 = H less R}, so $\coker(\beta^1_X)\neq 0$.
\end{proof}

\section{$K_2$-regularity and non-simplicial cones}\label{sec:non-simplicial}

In this section, we prove that non-simplicial toric varieties are not $K_2$-regular. The proof is by induction from the dimension 3 case. We will first prove the base case, then prove a combinatorial lemma needed for the induction step, and finally recombine in Lemma \ref{lemma: non-simplicial}. 

A key ingredient of the proof of the base case is a description of $\tors \Omega^1_X$, for a toric variety $X$, which equals $ \ker(\beta_X^1)$ by Lemma \ref{lemma: torsion-kernel}. Such a description has already been provided by \cite{torsion}, along with some useful notation, as follows.

\begin{definition}
For a subset $H \subset S_\sigma$, we adopt the following definitions:
\begin{itemize}
	\item Let $\ZZ^{H}$ denote the free abelian group generated by the basis $e_h \in H$. Let $\ZZ^{H+}$ denote the submonoid of non-negative integral combinations in $\ZZ^H$.
	\item Let $\phi: \ZZ^{H} \to M$ denote the homomorphism of groups given by $e_h \mapsto h$.
	\item Let us denote by $L(H)$ the subgroup $\ker(\phi) \subset \ZZ^H$.
	\item For any $\ell = \sum_{h \in H} \ell_h e_h \in \ZZ^H$, we define its support to be the set
	\begin{equation}
		\supp \ell \coloneq \{h \in H \mid \ell_h \neq 0\}.
	\end{equation}
	\item Every $\ell \in L(H)$ is uniquely representable as a difference $\ell = \ell^+ - \ell^-$ with $\ell^+, \ell^- \in \ZZ^{H+}$ and $\supp \ell^+ \cap \supp \ell^- = \emptyset$. Thus there is a well defined map of sets $L \to S_\sigma \subseteq M$ given by $\ell \mapsto \phi(\ell^+) \eqcolon \psi(\ell)$. Altmann calls $\psi (\ell)$ the ``\emph{bar-value}'' of $\ell$.
	\item For any $m \in S_\sigma$, let $H_m \coloneq \{h \in H \mid h \leq m\}$ .
	\item For any $m \in S_\sigma$, we define the group
	\begin{equation}
		T_m \coloneq \frac{L(H_m)}{\langle \ell \in L(H_m) \mid \psi(\ell) \leq m \rangle} = \frac{\langle \ell \in L(H) \mid \psi(\supp \ell) \leq m \rangle}{\langle \ell \in L(H) \mid \psi(\ell) \leq m \rangle}
	\end{equation}
\end{itemize}
\end{definition}

The main result of Altmann \cite[2.2]{torsion} is that
\begin{theorem} \label{thm:kahler_torsion}
	If $X = U(\sigma,N)$ is a toric variety then for any $m \in S_\sigma$ we have
	\begin{equation}
	\tors (\Omega_{X}^1)(m) = T_m \otimes_{\ZZ} k.
	\end{equation}
\end{theorem}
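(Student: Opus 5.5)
The plan is to compute $\Omega^1_X(m)$ explicitly from a presentation of $k[S_\sigma]$ and then identify its torsion as a kernel. Lemma \ref{lemma: torsion-kernel} gives $\tors\Omega^1(X) = \ker(\beta^1_X)$, which is the kernel of the localisation map to the torus. In degree $m$, that map is
\[
\Omega^1_X(m) \to \dani^1_{\mathbf{T}_N}(m) = \chi^m \otimes (M\otimes k), \qquad \chi^{m-h}d\chi^h \mapsto \chi^m\otimes h .
\]
So it suffices to (i) present the vector space $\Omega^1_X(m)$ by generators and relations, and (ii) take the kernel of this map.

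\textbf{Step 1 (presentation).} Let $H = H_\sigma$. The surjection $k[x_h : h\in H]\to k[S_\sigma]$, $x_h\mapsto\chi^h$, has kernel the toric ideal. I will use the standard fact that this ideal is spanned (as a $k$-vector space, hence also generated as an ideal) by the binomials $x^{\ell^+}-x^{\ell^-}$ for $\ell\in L(H)$. The conormal sequence then gives
\[
\Omega^1_X = \Bigl(\bigoplus_{h\in H} k[S_\sigma]\,d\chi^h\Bigr)\Big/\bigl\langle \chi^c\, d(\chi^{\ell^+}-\chi^{\ell^-}) : \ell\in L(H),\ c\in S_\sigma\bigr\rangle .
\]

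\textbf{Step 2 (degree $m$ piece).} The free module has degree-$m$ component with basis $\chi^{m-h}d\chi^h$ for $h\in H_m$, which identifies it with $k^{H_m}$. Expanding $d(\chi^{\ell^+}-\chi^{\ell^-})$ by the Leibniz rule gives
\[
\sum_h (\ell^+_h-\ell^-_h)\,\chi^{\psi(\ell)-h}d\chi^h .
\]
This relation is homogeneous of degree $\psi(\ell)$, and in the above basis it is exactly the vector $\ell$. Multiplying by $\chi^c$ moves it to degree $\psi(\ell)+c$ without changing the coefficient vector. Hence
\[
\Omega^1_X(m) = k^{H_m}\big/\operatorname{span}_k\{\ell\in L(H) : \psi(\ell)\le m\}.
\]
Here I must check that $\psi(\ell)\le m$ forces $\supp\ell\subseteq H_m$. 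This holds because each $h$ in $\supp\ell^\pm$ satisfies $h\le\psi(\ell)$; note that $\phi(\ell^-)=\phi(\ell^+)=\psi(\ell)$.

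\textbf{Step 3 (kernel).} The map to $\chi^m\otimes(M\otimes k)$ is induced by $\phi\otimes k$. Since $k\supseteq\QQ$ is flat over $\ZZ$, the kernel of $\phi\otimes k$ on $k^{H_m}$ is $L(H_m)\otimes k$. Therefore
\[
\tors\Omega^1_X(m) = \bigl(L(H_m)\otimes k\bigr)\big/\operatorname{span}_k\{\ell : \psi(\ell)\le m\} = T_m\otimes_\ZZ k,
\]
where the last equality again uses right exactness of $\otimes k$.

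The main obstacle is bookkeeping rather than depth. The points needing care are the generation of the toric ideal by the binomials from $L(H)$, and the verification in Step 2 that the degree-$m$ part of the relation submodule is exactly the span of those $\ell$ with $\psi(\ell)\le m$, with no further relations appearing.
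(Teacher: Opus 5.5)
The paper does not prove this statement; it is quoted from Altmann \cite[2.2]{torsion}, so there is no in-paper argument to compare with. Your proposal is a correct, self-contained proof by the natural route.

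\begin{itemize}
\item Lemma \ref{lemma: torsion-kernel}, whose proof does not use this theorem (so there is no circularity), identifies $\tors(\Omega^1_X)(m)$ with the kernel of $\chi^{m-h}d\chi^h\mapsto h\in M\otimes k$.
\item The conormal sequence presents $\Omega^1_X(m)$ as $k^{H_m}$ modulo the span of those $\ell\in L(H)$ with $\psi(\ell)\le m$. Your observation $\phi(\ell^+)=\phi(\ell^-)=\psi(\ell)$ is exactly what makes each relation $d(x^{\ell^+}-x^{\ell^-})$ homogeneous of degree $\psi(\ell)$ with coefficient vector $\ell$ supported in $H_m$. Hence in degree $m$ the relation submodule is just the $k$-span of these vectors, with no further relations.
\item Flatness of $k$ over $\ZZ$, together with right exactness of $-\otimes_\ZZ k$, finishes the argument.
\end{itemize}

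Compared with the citation, your route makes the paper self-contained at this point. It also never uses minimality of $H$, so it proves the same formula for any finite monoid generating set $H$ of $S_\sigma$. This includes the case where $\sigma$ is not full-dimensional, where the Hilbert basis of Definition \ref{definition: hilbert basis} is not available.

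One sentence in Step 1 is false as written and should be repaired. The toric ideal is \emph{not} spanned as a $k$-vector space by the binomials $x^{\ell^+}-x^{\ell^-}$, $\ell\in L(H)$. Take $S=\{(a,b)\in\ZZ^2 : 0\le b\le 2a\}$ with $H=\{(1,0),(1,1),(1,2)\}$, and write $x_1,x_2,x_3$ for the corresponding variables. Then $x_1x_2x_3-x_2^3$ lies in the ideal in degree $(3,3)$, whereas these binomials occur only in the degrees $\psi(\ell)=(2j,2j)$, $j\ge 1$.

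The correct statement is that the ideal is spanned by all $x^u-x^v$ with $\phi(u)=\phi(v)$. Since $x^u-x^v=x^{\min(u,v)}\bigl(x^{\ell^+}-x^{\ell^-}\bigr)$ with $\ell=u-v$, the binomials $x^{\ell^+}-x^{\ell^-}$ generate it as an \emph{ideal}. The conormal sequence only needs ideal generators: the image of $I/I^2$ is the $k[S_\sigma]$-submodule generated by their differentials. So the rest of your argument is unaffected.
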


\begin{lemma} \label{lemma: base case}
    If $X = U(\sigma,N)$ is an affine toric variety such that $\sigma$ is non-simplicial, full-dimensional, and $\rank N = 3$, then either $\ker(\beta^1_X) \neq 0, \coker(\beta^1_X)\neq 0$, or $\coker(\beta^2_X)\neq 0$.
\end{lemma}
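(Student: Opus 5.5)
We argue by contradiction: assume $\ker(\beta^1_X)=0$ and $\coker(\beta^1_X)=0$, and produce a nonzero element of $\coker(\beta^2_X)$.

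\textbf{Step 1: the Hilbert basis consists of rays.} Since $\coker(\beta^1_X)=0$, the contrapositive of Lemma \ref{lemma: coker1 = H less R} gives $H_\sigma=R_\sigma$. Since $\sigma$ is full-dimensional and non-simplicial of rank $3$, the dual cone $\sigma\dual$ is also non-simplicial. So $r\coloneq|R_\sigma|\ge 4$, and the Hilbert basis is exactly the set of ray generators $h_1,\dots,h_r$. We order them cyclically around the polygonal cross-section of $\sigma\dual$, so that consecutive pairs $h_i,h_{i+1}$ span the $2$-dimensional faces $\theta_i$ of $\sigma\dual$.

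\textbf{Step 2: exploit torsion-freeness.} Since $\ker(\beta^1_X)=0$, Lemma \ref{lemma: torsion-kernel} and Theorem \ref{thm:kahler_torsion} give $T_m=0$ for all $m\in S_\sigma$. Take any relation $\ell\in L(H_\sigma)$. Because $r\ge4>3=\rank M$, nonzero relations exist, for instance one supported on $h_1,h_2,h_3,h_4$. Put $m=\psi(\ell)$. Then $\ell\in L(H_m)$, and $T_m=0$ says $\ell$ is generated by relations $\ell'$ with $\psi(\ell')<m$ strictly. I would not push this to a full combinatorial characterization. Its role is only to rule out the degenerate configurations in which some $\Omega^2(m)$ could be large enough to surject onto $\dani^2(m)$ by accident. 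In practice it lets us treat $\Omega^1_X$ as a submodule of $k[S_\sigma]\otimes M$, which makes the dimension count in Step 3 honest.

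\textbf{Step 3: find a degree where $\beta^2_X$ fails to be surjective.} I would take $m=h_1+h_3$, where $h_1,h_3$ are non-adjacent rays; these exist since $r\ge4$. Then $\Gamma(m)=\sigma\dual$ is $3$-dimensional, so $\dim\dani^2_X(m)=\binom32=3$. The image of $\beta^2_X$ in degree $m$ is spanned by the elements $\chi^{m_0}\otimes(m_1\wedge m_2)$ with $m_0+m_1+m_2=m$ and $m_i\in S_\sigma$, and $m_1,m_2$ may be taken to lie in $H_\sigma$. The possible decompositions of $m$ are controlled by the elements $h\le m$. Whenever $m=h_1+h_3$ admits no decomposition other than the obvious one, the image is spanned by $\chi^0\otimes(h_1\wedge h_3)$ alone, of dimension $1<3$, so $\coker(\beta^2_X)(m)\neq0$. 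If $m$ does have alternative decompositions $m=\sum c_ih_i$, each one gives a relation in $L(H_\sigma)$ with bar-value $m$. Choosing $m$ minimal in $\le$ among sums of two non-adjacent rays, Step 2 then forces these relations to come from strictly smaller bar-values. Because $\Gamma$ of a sum of adjacent rays is only $2$-dimensional, those smaller-degree pieces should contribute wedges lying in at most two of the three directions of $\bigwedge^2 V$. The image therefore has dimension at most $2<3$, and the contradiction follows.

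\textbf{Main obstacle.} Making Step 3 rigorous when $m$ has several decompositions, that is, bounding the rank of the span of the $m_1\wedge m_2$ in $\bigwedge^2 M_k$ by $2$, is the hardest part. If the minimality argument fails, the fallback is to use $\coker(\beta^1)=0$ directly: $\dim\Omega^1_X(m)=3$ in the same degree forces many decompositions of $m$. Combined with the torsion-freeness of Step 2, a planar convexity argument on the polygon of rays should then show that all relevant $m_1,m_2$ lie in a common plane. That would cap the rank of the image at $1$ rather than $3$.
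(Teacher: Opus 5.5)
\textbf{There is a genuine gap: the decisive step is unproved, and the reasons you give for it do not hold up.} Your choice of degree in Step 3 ($m$ minimal among sums of two non-adjacent rays) is actually workable. But the step that carries the proof is left open: bounding $\dim \ima\beta^2_X(m)\le 2$ when $m$ has several decompositions.

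\begin{itemize}
\item \textbf{Step 2 misreads Altmann's criterion.} $T_m$ is $L(H_m)$ modulo the relations with $\psi(\ell)\le m$, not $\psi(\ell)<m$. So a relation whose bar-value is exactly $m$ is killed automatically, and $T_m=0$ says nothing about it. Take the quadric cone, with rays $(0,0,1),(1,0,1),(1,1,1),(0,1,1)$ and $m=(1,1,2)$. There $\Omega^1_X$ is torsion-free, yet $h_1+h_3=h_2+h_4$ has bar-value exactly $m$, and no nonzero relation has smaller bar-value.
\item \textbf{The heuristic about adjacent rays points the wrong way.} Since $m$ lies in the interior of $\sigma\dual$, a wedge $h_i\wedge h_{i+1}$ of adjacent rays never lies in the plane $V\wedge m\subset\bigwedge^2 V$, where $V=M\otimes k$. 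So adjacent pairs with $h_i+h_{i+1}\le m$ are precisely what could raise the rank to $3$.
\item \textbf{The fallback fails.} $\coker(\beta^1_X)(m)=0$ forces $H_m$ to span $V$, so those elements are not coplanar. In the quadric example the image has rank exactly $2$, not $1$.
\end{itemize}

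\textbf{The missing observation} needs only $H_\sigma=R_\sigma$: with $m$ chosen as you do, every pair $h_i\neq h_j$ with $h_i+h_j\le m$ satisfies $h_i+h_j=m$.
\begin{itemize}
\item For non-adjacent pairs this is just minimality.
\item For an adjacent pair, write $m=h_i+h_j+s$ with $s\in S_\sigma$. Then $s\neq 0$, because $m$ is not in the $2$-face spanned by $h_i,h_j$. Suppose some ray $h_k\le s$ had $k\notin\{i,j\}$. Since $r\ge 4$, adjacent rays have no common neighbour, so $k$ is non-adjacent to, say, $i$. Then $h_i+h_k\le m-h_j$ is a non-adjacent sum strictly below $m$, contradicting minimality. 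Hence $s$, and therefore $m$, lies in that $2$-face, which is a contradiction.
\end{itemize}
Now $\ima\beta^2_X(m)$ is spanned by the elements $\chi^m\otimes(h_i\wedge h_j)$ with $h_i+h_j\le m$. For such pairs $h_i\wedge h_j=h_i\wedge m$, so the image lies in $\chi^m\otimes(V\wedge m)$. That space has dimension $2<3=\dim\dani^2_X(m)$.

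\textbf{Comparison with the paper.} Once this is supplied, your route proves the lemma with no torsion input at all; the alternative $\ker(\beta^1_X)\neq 0$ is never needed. The paper instead takes $m$ minimal among bar-values of all nonzero relations. It then uses $T_m=0$ and a support count to force $|H_m|=4$ and $\rank L(H_m)=1$, which gives $\dim\Omega^2_X(m)=2$.
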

\begin{proof}
    We proceed by assuming that $\sigma$ is non-simplicial and that \begin{equation}
        \ker(\beta^1_X)  =\coker(\beta^1_X)= 0
    \end{equation} and deriving that $\coker(\beta^2_X) \neq 0$ by five small combinatorial claims. By Lemma \ref{lemma: coker1 = H less R}, since $\coker(\beta^1_X)=0$, $H_\sigma= R_\sigma$ (as in Definition \ref{definition: hilbert basis}).

    \begin{claim} \label{claim: min support for relation}
        If $\ell \in L(H_\sigma) \backslash \{O \}$, then $|\supp \ell^+|, |\supp \ell^-| \geq 2$.
    \end{claim}
    \begin{proof}
    Since $H_\sigma= R_\sigma$, if $|\supp(\ell^+)|=1$ then one extremal ray is equal to a positive linear combination of the others, which contradicts the definition of an extremal ray.
    \end{proof}

    \begin{claim} \label{claim: dim L + dim Gamma = H}
        If $m \in S_{\sigma}$ then $L((H_\sigma)_m)$ is a free abelian group and $\rank L((H_\sigma)_m) + \dim \Gamma(m)= |(H_\sigma)_m|$.
    \end{claim}
    \begin{proof}
        By definition, $L((H_\sigma)_m)$ is a subgroup of a free abelian group, so it is a free abelian group. For any $n \in \sigma \cap N$, if $\langle m,n\rangle = 0$ then for any $h < m$, since $\langle m - h, n \rangle \geq 0$, we have $\langle h,n \rangle = 0$. Conversely, if $\langle h, n \rangle = 0$ for all $h \in H_\sigma$ then $\langle m, n \rangle = 0$. Therefore $\dim \Gamma(m) = \rank \langle (H_\sigma)_m \rangle = \rank \ima \phi|_{\ZZ^{(H_\sigma)_m}}$. Therefore, by the first isomorphism theorem,
        \begin{align}
            \rank L((H_\sigma)_m) + \dim \Gamma(m) &= \rank \ker \phi|_{\ZZ^{(H_\sigma)_m}} + \rank \phi|_{\ZZ^{(H_\sigma)_m}}\\
            &= \rank \ZZ^{(H_\sigma)_m} = |(H_\sigma)_m|.
        \end{align}
    \end{proof}

    \begin{claim} \label{claim: dim Gamma = 3}
        If $\ell \in L(H_\sigma) \backslash \{ O\}$ then $\dim \Gamma(\psi(\ell)) = 3$.
    \end{claim}
    \begin{proof}
        Let $m = \psi(\ell)$. If $m \in \theta$ for some dual face $\theta < \sigma\dual$ then by Lemma \ref{lemma: hilbert basis face}, $(H_\sigma)_m \subseteq \Hilb(\theta) =  \theta \cap H_\sigma$. Since $H_\sigma = R_\sigma$, we have $(H_\sigma)_m \subseteq \theta \cap R_\sigma= \Rays(\theta)$. However, if $\dim \theta \leq 2$, then $\theta$ must be simplicial, so $|\Rays(\theta)| = \dim \theta$. Therefore $|(H_\sigma)_m| \leq \dim \theta \leq \dim \Gamma(m)$. Therefore by Claim \ref{claim: dim L + dim Gamma = H}, $\rank L((H_\sigma)_m) = 0$, which contradicts the assumption that $\ell \in L(H_\sigma)\backslash \{O \} $ and $m = \psi(\ell)$. Thus if $\ell \in L(H_\sigma) \backslash \{ O\}$ then $\dim \Gamma(\psi(\ell)) \geq 3$. Since $\rank N =3$, a cone in $M$ may be of dimension at most $3$, so $\dim \Gamma(\psi(\ell)) = 3$.  \end{proof}

    \begin{claim} \label{claim: dim L(Hm) = 1}
        For any $m \in \Hilb(\psi(L(H_\sigma)))$, $\rank L((H_\sigma)_m) = 1$.
    \end{claim}
    \begin{proof}
        Since $\ker(\beta^1_X)=0$, Theorem \ref{thm:kahler_torsion} implies that $T_m = 0$ for all $m \in S_\sigma$. Therefore
	
	\begin{equation}
		L((H_\sigma)_m) = \langle \ell \in L((H_\sigma)_m) \mid \psi(\ell) \leq m \rangle,
	\end{equation}
	
	If, moreover, $m \in \Hilb(\psi(L(H_\sigma)))$ then $\psi(\ell) \leq m$ implies $\ell \in \psi^{-1}(m)$, so
	
	\begin{equation}
		L((H_\sigma)_m) = \langle \ell \in L((H_\sigma)_m) \mid \psi(\ell) \leq m \rangle = \langle \psi^{-1}(m) \cap \ZZ^{(H_\sigma)_m}\rangle.
	\end{equation}

    Let $s$ denote $\rank \langle \phi^{-1}(m) \cap \ZZ^{(H_\sigma)_m} \rangle - 1$, and let $\{ b_0, \ldots, b_s \}$ be a basis for the sublattice $ \langle \phi^{-1}(m) \cap \ZZ^{(H_\sigma)_m} \rangle$. Then we have 
    \begin{equation}
        \langle \psi^{-1}(m) \cap \ZZ^{(H_\sigma)_m}\rangle = \langle b_i - b_j \mid 0 \leq i,j\leq s \rangle = \langle b_0 -b_i \mid 1 \leq i \leq s \rangle.
    \end{equation}
    
    Since the $b_i$ are independent for $1 \leq i \leq s$, the $(b_0-b_i)$ are also independent, so $\rank L((H_\sigma)_m) = \rank \langle b_0 - b_i \mid 1 \leq i \leq s \rangle = s$. Therefore by Claims \ref{claim: dim L + dim Gamma = H} and \ref{claim: dim Gamma = 3},
    \begin{equation} \label{eq: s + 3 = H}
        s+3=s + \dim \Gamma(m) = |(H_\sigma)_m|.
    \end{equation}

    Next, if there exists $i \neq j$ and $h \in \supp(b_i) \cap \supp(b_j)$ then $e_h - b_i ,e_h - b_j  \in \ZZ^{(H_\sigma)_m+}$, so $\psi(e_h - b_i -(e_h - b_j ))=m-h < m$, which contradicts $m \in \Hilb(\psi(L(H_\sigma)))$. Therefore $\supp(b_i) \cap \supp(b_j) = \emptyset$ for all $i \neq j$, so
    \begin{equation}
        |(H_\sigma)_m| = \sum^s_{i=0} |\supp b_i|.
    \end{equation}
    By Claim \ref{claim: min support for relation}, for each $i \neq j$, $|\supp(b_i - b_j)| \geq 4$. Therefore we have
    \begin{align}
        2|(H_\sigma)_m| &= 2 \sum_{i=0}^s|\supp b_i| \\
        &= |\supp(b_{s} - b_0)| + \sum_{i=0}^{s-1} | \supp (b_i - b_{i+1})|\\
        & \geq 4(s+1).
    \end{align}

    Since $s+3 = |(H_\sigma)_m|$, this implies $2(s+3) \geq 4(s+1)$, so $s \leq 1$. But $s \geq 1$, so $\rank L((H_\sigma)_m)=s=1$.
    \end{proof}

    \begin{claim} \label{claim: coker 2 non zero}
        If $m \in S_\sigma$, $\dim \Gamma(m) = 3$ and $\rank L((H_\sigma)_m)=1$, then 
        \begin{equation}
            \coker(\beta_X^2)(m) \neq 0
        \end{equation}
    \end{claim}
    \begin{proof}
        Since $\rank L((H_\sigma)_m)=1$, there is a unique (up to sign) generator of $L((H_\sigma)_m)$, $\ell$. By Claim \ref{claim: dim L + dim Gamma = H}, $|(H_\sigma)_m|= \dim \Gamma(m) + \rank L((H_\sigma)_m) = 3 + 1 = 4$. By Claim \ref{claim: min support for relation}, $|\supp\ell^+|, |\supp \ell^-| \geq 2$, so
        \begin{equation}
            2 + 2 \leq |\supp\ell^+|+ |\supp\ell^-| = |\supp\ell|  = |(H_\sigma)_m| = 4.
        \end{equation}
        So we must have $|\supp\ell^+| = |\supp \ell^-| = 2$. Let $e_1, e_2 = \supp(\ell^+)$ and $\{e_3,e_4 \} = \phi( \supp \ell^-)$. Since $\phi(e_i) \in H_\sigma$ for each $1 \leq i \leq 4$, and the $e_i$ are distinct, the $\phi(e_i)$ are also distinct. Therefore we have
        \begin{equation}
            \Omega^2_X(m) = \langle \chi^{m-\phi(e_1)-\phi(e_2)} d\chi^{\phi(e_1)}\wedge d\chi^{\phi(e_1)}, \chi^{m-\phi(e_3)-\phi(e_4)} d\chi^{\phi(e_3)}\wedge d\chi^{\phi(e_4)} \rangle,
        \end{equation}
        so $\dim \Omega^2_X(m) = 2$. On the other hand, $\dim \dani^2_X(m) = \dim \Gamma(m)=3$, so $\dim \Omega^2_X(m) < \dim \dani_X^2(m)$. Since $\beta^2_X$ is an $M$-graded linear map, $\coker(\beta^2_X)(m) \neq 0$. 
    \end{proof}

We can now complete the proof of the Lemma using claims \ref{claim: dim Gamma = 3}, \ref{claim: dim L(Hm) = 1}, and \ref{claim: coker 2 non zero}. Since $\sigma$ is not simplicial, $L(H_\sigma) \neq 0$, so there exists $m \in \Hilb(\psi(L(H_\sigma)))$. By Claim \ref{claim: dim Gamma = 3}, $\dim \Gamma(m)=3$ and by Claim \ref{claim: dim L(Hm) = 1}, $\rank L((H_\sigma)_m)=1$. Therefore the hypotheses of Claim \ref{claim: coker 2 non zero} are satisfied, so $\coker(\beta_X^2)\neq 0$.
\end{proof}

The proof of Lemma \ref{lemma: base case} relied on constraints which are unique to dimension $3$. On the other hand, the induction step relies on an elementary combinatorial fact which applies exactly when the dimension is at least $4$. The analogous fact for convex polytopes is that a simple and simplicial polytope of dimension at least $3$ must be a simplex \cite[p. 67]{polytopes}. By the correspondence between strongly convex cones and convex polytopes, it follows that a strongly convex cone of dimension at least $4$ whose faces and dual faces are all simplicial must itself be simplicial. In the interest of completeness, we include a proof for Lemma \ref{lem:simplicial cones} along the lines of \cite[p. 67]{polytopes}, translated into the language of convex cones. 

\begin{lemma} \label{lem:simplicial cones}
    Let $\sigma$ be a strongly convex, full-dimensional cone of dimension $d \geq 4$, such that if $\tau < \sigma$ then $\tau$ is simplicial, and if $\theta < \sigma\dual$ then $\theta$ is simplicial. Then $\sigma$ is simplicial.
\end{lemma}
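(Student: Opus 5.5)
The plan is to translate the cone statement into the polytope statement and run the standard argument that a simple and simplicial polytope of dimension at least $3$ is a simplex. Since $\sigma$ is strongly convex and full-dimensional, choose $u$ in the interior of $\sigma\dual$ and slice: $P \coloneq \sigma \cap \{\langle u,\cdot\rangle = 1\}$ is a convex polytope of dimension $d-1 \geq 3$. Its faces correspond bijectively and dimension-shiftingly to the nonzero faces of $\sigma$: a face $\tau < \sigma$ of dimension $r$ gives a face of $P$ of dimension $r-1$ with the same number of vertices as $\tau$ has rays. Thus the hypothesis that every proper face $\tau < \sigma$ is simplicial says exactly that every proper face of $P$ is a simplex, i.e. $P$ is simplicial. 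Similarly, the faces $\theta < \sigma\dual$ correspond, order-reversingly via $\theta \mapsto \theta^\perp \cap \sigma$, to faces of $\sigma$. A ray $\tau$ of $\sigma$, that is, a vertex $v$ of $P$, corresponds to the facet $\theta = \tau^\perp\cap\sigma\dual$ of $\sigma\dual$, of dimension $d-1$. The rays of $\theta$ are dual to the facets of $\sigma$ containing $\tau$. So $\theta$ being simplicial means that exactly $d-1$ facets of $P$ contain $v$. Hence $P$ is simple.

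We then show that $P$ is a simplex. Take any vertex $v$ of $P$. Because $P$ is simple of dimension $e = d-1$, $v$ has exactly $e$ neighbours $w_1,\dots,w_e$ along edges, and the vertex figure at $v$ is an $(e-1)$-simplex. Every $2$-face of $P$ containing $v$ contains two of the edges $vw_i, vw_j$, and conversely each pair of edges spans a $2$-face (this is where simplicity is used: the $2$-faces at $v$ correspond to the edges of the vertex figure simplex). Since $P$ is simplicial, every $2$-face is a triangle, so $w_i w_j$ is an edge for all $i \neq j$. Now consider a neighbour $w_1$. It is adjacent to $v$ and to $w_2,\dots,w_e$, which is already $e$ neighbours, so by simplicity these are all of its neighbours. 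Therefore the set $\{v,w_1,\dots,w_e\}$ is closed under adjacency. Since the edge graph of a polytope is connected, this set is all the vertices of $P$. So $P$ has $e+1$ vertices in dimension $e$ and is a simplex. Lifting back, $\sigma$ has $d$ rays in dimension $d$, so $\sigma$ is simplicial.

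The main obstacle is the bookkeeping in the duality step: checking carefully that "every face of $\sigma\dual$ is simplicial" gives simplicity of $P$ at every vertex. In fact only the facets of $\sigma\dual$ are needed. The check is that the facets of $\sigma\dual$ are exactly the $\tau^\perp\cap\sigma\dual$ for rays $\tau$, and that the rays of such a facet are the duals of the facets of $\sigma$ containing $\tau$. We also need the standard fact that in a simple $e$-polytope the faces through a vertex are in bijection with the subsets of its incident edges; I would cite this via the vertex figure. Where $d \geq 4$ is used: it gives $e \geq 3$, so each $w_1$ has at least the two further neighbours $w_2,w_3$ forced by the triangles, and the $2$-faces at $v$ are proper faces of $P$, hence triangles. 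For $e=2$ a polygon is simple and simplicial without being a triangle, which is why the statement requires dimension at least $4$.
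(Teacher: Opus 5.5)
Your proof is correct and is essentially the paper's argument: both run Ziegler's ``simple and simplicial implies simplex'' proof on the edge graph, with degree $d-1$ at each ray coming from the simplicial facet of $\sigma\dual$, then pairwise adjacency of the neighbours, then a connected component of size $d$. The only difference is cosmetic: the paper stays in cone language and gets pairwise adjacency from the simplicial facets of $\sigma$ through $\rho$ (each facet omits exactly one neighbour, and $d\geq 4$ supplies a third neighbour to omit), whereas you slice to a polytope and get it from the triangular $2$-faces at a vertex via the vertex figure.
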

\begin{proof}
    Let $\Delta$ denote the face poset of a cone $\sigma$ satisfying the hypothesis of the lemma, and $\Delta'$ the face poset of $\sigma\dual$. Recall from section 1 that we denote by $\Delta(r)$ the set of $r$-dimensional cones in $\Delta$, and likewise for $\Delta'$. Let $\iota: \Delta \to \Delta'$ denote the order reversing map $\tau \mapsto \tau^\perp \cap \sigma\dual$ and $\iota': \Delta' \to \Delta$ the map $\tau \mapsto \tau^\perp \cap \sigma$. Because $\sigma$ is a strongly convex, full dimensional cone, the following properties hold

    \begin{itemize}
        \item for any $\tau \in \Delta$, $\dim \iota(\tau) + \dim \tau = d$.
        \item for any $\tau, \rho \in \Delta$, $\rho < \tau$ if and only if $\iota(\rho) > \iota(\tau)$.
        \item $\iota$ and $\iota'$ are inverses.
    \end{itemize}

Note that the set $\Delta(1)$ and $\Delta(2)$ together form the vertices and edges of a graph. For a ray $\rho \in \Delta(1)$, let us denote by $A(\rho)$ the set of neighbours of $\rho$ in this graph. 

Let $\rho \in \Delta(1)$. Then $\iota(\rho) \in \Delta'(d-1)$ is simplicial, so $|\iota(\rho)(d-2)|=d-1$. And $\iota'(\iota(\rho)(d-2)) = \{\epsilon \in \Delta(2)\mid \rho < \varepsilon \}$, which is clearly in bijection with $A(\rho)$. Therefore, since $\iota'$ is a bijection, $|A(\rho)|=|\iota(\rho)(d-2)|=d-1$. 

Let $\tau = \iota'(\theta)$ for some $\theta \in \iota(\rho)(1)$. As $\theta < \iota(\rho)$, we have $\tau = \iota'(\theta) > \iota'\iota(\rho) = \rho$, and so $\rho \in \tau(1)$. Note that $\iota(\rho)$ and $\tau$ are both assumed to be simplicial. Since $\tau$ is simplicial, this implies $\tau(1) \backslash \{ \rho \}\subseteq A(\rho)$. Also since $\tau$ is simplicial, $|\tau(1)|=d-1$, so $|\tau(1) \backslash \{ \rho \}|=d-2$. Then, since $|A(\rho) |=d-1$, there is a unique $\rho' \in A(\rho) \backslash \tau(1)$. Conversely, since $\iota(\rho)$ is simplicial, $|\iota(\rho)(1)|=d-1$, so there are $d-1$ unique faces $\tau \in \Delta(d-1)$ such that $\rho < \tau$. Therefore, for each $\rho' \in A(\rho)$, there exists a face $\tau$ such that $\tau(1) = A(\rho) \cup \{ \rho\} \backslash \{ \rho' \}$.

Let $\rho' \in A(\rho)$, then for any $\rho'' \in A(\rho) \backslash \{ \rho' \}$. Since $d \geq 4$, $|A(\rho)| \geq 3$, so there exists $\rho''' \in A(\rho) \backslash \{ \rho',\rho'' \}$. Therefore there exists $\tau \in \Delta(d-1)$ such that $\tau(1) = A(\rho) \cup \{ \rho\} \backslash \{ \rho''' \}$. In particular, $\rho', \rho'' \in \tau(1)$, so $\rho'' \in A(\rho')$. As $\rho''$ was arbitrary, $A(\rho') \subseteq A(\rho) \cup \{\rho \} \backslash \{ \rho' \}$. Since $|A(\rho')|=d-1 = |A(\rho) \backslash \{ \rho'\} \cup \{\rho\}|$, we have $A(\rho') = A(\rho) \cup \{\rho \} \backslash \{ \rho' \}$. This implies that the connected component containing $\rho$ in the graph $\Delta(1)$ is $A(\rho) \cup \{\rho \}$; since the graph is connected, $|\Delta(1)| = |A(\rho) \cup \{\rho \}|=d$. Therefore $\sigma$ is simplicial. 

\end{proof}

\begin{lemma} \label{lemma: non-simplicial}
    Let $\sigma$ be a non-simplicial strongly convex cone, and let $X = U(\sigma,N)$. Then $\ker(\beta_X^1) \neq 0$, $\coker(\beta^1_X) \neq 0$, or $\coker(\beta_X^2) \neq 0$.
\end{lemma}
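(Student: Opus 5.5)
We argue by induction on the dimension of $\sigma$, assuming throughout that all three groups $\ker(\beta^1_X)$, $\coker(\beta^1_X)$ and $\coker(\beta^2_X)$ vanish and aiming for a contradiction with non-simpliciality. Write $F$ for any of the three functors $\ker(\beta^1)$, $\coker(\beta^1)$, $\coker(\beta^2)$.

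The first step is to reduce to the case where $\sigma$ is full-dimensional. Writing $X \cong \mathbf{T}_{N(\sigma)} \times U(\sigma, N_\sigma)$, Lemma \ref{lemma: product} shows that $F(U(\sigma,N_\sigma))$ is a direct summand of $F(X)$. So vanishing for $X$ forces vanishing for $U(\sigma,N_\sigma)$, and we may assume $\sigma$ is full-dimensional of dimension $d$. Since every cone of dimension at most $2$ is simplicial, we have $d \geq 3$. The case $d=3$ is exactly Lemma \ref{lemma: base case}.

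For $d \geq 4$ we use induction. Suppose the statement holds for all cones of smaller dimension. We transport the vanishing hypothesis to two kinds of smaller cones.
\begin{itemize}
\item[(a)] Proper faces $\tau < \sigma$. By Lemma \ref{lemma: open lifting}, $F(U(\tau,N_\tau)) = 0$ for each of the three functors. By the inductive hypothesis, every proper face $\tau$ is therefore simplicial.
\item[(b)] Proper dual faces $\theta < \sigma\dual$. Write $\theta = \iota(\rho)$ with $\rho = \iota'(\theta)$ a nonzero face of $\sigma$. The orbit closure $V_\sigma(\rho)$ is the affine toric variety $U(\bar\sigma, N(\rho))$, where $\bar\sigma$ is the image of $\sigma$ in $N(\rho)_{\RR}$. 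Its dual cone is $\sigma\dual \cap \rho^\perp = \theta$, viewed in $M(\rho)$. By Lemma \ref{lemma: closed lifting}, $F(V_\sigma(\rho))$ is a direct summand of $F(X)$, hence vanishes. Since $\dim \bar\sigma = d - \dim\rho < d$, the inductive hypothesis shows $\bar\sigma$ is simplicial. A full-dimensional cone is simplicial if and only if its dual is, so $\theta$ is simplicial.
\end{itemize}

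With every proper face of $\sigma$ and every proper face of $\sigma\dual$ simplicial, and $d \geq 4$, Lemma \ref{lem:simplicial cones} shows that $\sigma$ is simplicial. This is the desired contradiction.

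The main point needing care is step (b): one must check that Lemma \ref{lemma: closed lifting} really applies to $V_\sigma(\rho)$, which is a toric variety of dimension $d-\dim\rho$, and that the dual-face condition in Lemma \ref{lem:simplicial cones} is faithfully captured by orbit closures. The latter rests on the identification $\bar\sigma\dual = \theta$ (full-dimensional in $M(\rho)_{\RR}$) together with the duality fact that a full-dimensional cone is simplicial iff its dual is. A minor further point is that the inductive hypothesis must be the full statement for arbitrary (not necessarily full-dimensional) cones. This is handled by making the reduction in the first step part of each induction stage.
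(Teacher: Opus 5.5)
Your proposal is correct and matches the paper's proof: you reduce to the full-dimensional case, use Lemma \ref{lemma: base case} for $d=3$, and for $d\geq 4$ combine Lemma \ref{lem:simplicial cones} with Lemma \ref{lemma: open lifting} for faces and with Lemma \ref{lemma: closed lifting} applied to the orbit closures $V_\sigma(\rho)$ (whose cone has dual $\theta$) for dual faces. The only differences are cosmetic: you argue contrapositively over all proper faces rather than choosing a non-simplicial maximal face or dual face, and you cite Lemma \ref{lemma: product} for the torus-factor reduction, which is the correct lemma there (the paper cites Lemma \ref{lemma: closed lifting} at that step).
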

\begin{proof}
    Let $d$ denote the dimension of $X$. When $d\leq 2$ the statement is vacuously true because all cones of dimension $d \leq 2$ are simplicial. The base case $d=3$ is the statement of Lemma \ref{lemma: base case}. 
    
    Let our induction hypothesis be that the statement holds for all affine toric varieties of dimension $d$, for $d\geq 3$. Let $(\sigma,N)$ be a strongly convex non-simplicial cone of dimension $d+1$ and let $X = U(\sigma,N)$. By Lemma \ref{lemma: closed lifting}, we may assume that $X$ has no torus factors. By Lemma \ref{lem:simplicial cones}, since $\dim \sigma \geq 4$, $\sigma$ has either (a) a non-simplicial maximal face $\tau < \sigma$ or (b) a non-simplicial maximal dual face $\theta < \sigma\dual$.

    \textbf{Case (a)}
    
    Let us denote by $U$ the variety $U(\tau,N_\tau)$. By the induction hypothesis, since $\tau$ is non-simplicial and $\dim \tau = d$, then either $\ker(\beta_U^1) \neq 0$, $\coker(\beta^1_U) \neq 0$, or $\coker(\beta_U^2) \neq 0$. By Lemma \ref{lemma: open lifting}, one of $\ker(\beta_X^1)$, $\coker(\beta^1_X) $, or $\coker(\beta_X^2) $ is non-zero.

    \textbf{Case (b)}
    
   Consider the orbit closure $V \coloneq V_\sigma(\rho)$, where $\rho = \theta^\perp \cap \sigma$. Note that since $\theta$ is non-simplicial, $\theta\dual$ is also non-simplicial, and $(\theta\dual, N/N_\rho)$ is the cone-lattice pair associated to $V$. By the induction hypothesis, since $(\theta\dual, N(\rho))$ is non-simplicial and $\rank N(\rho) = d$, either $\ker(\beta_V^1)\neq 0$, $\coker(\beta^1_V) \neq 0$, or $\coker(\beta_V^2) \neq 0$. Thus, by Lemma \ref{lemma: closed lifting}, either $\ker(\beta_X^1) \neq 0$, $\coker(\beta^1_X) \neq 0$, or $\coker(\beta_X^2) \neq 0$.
\end{proof}

\begin{theorem}\label{thm:K2}
    Let $X = U(\sigma, N)$ be an affine toric variety over a field of characteristic zero. If $\sigma$ is not simplicial, then $X$ is not $K_2$-regular. 
\end{theorem}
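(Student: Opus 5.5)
The plan is to combine Lemma \ref{lemma: non-simplicial} with Lemma \ref{lemma: ker coker implies not K regular}. Let $\sigma$ be non-simplicial and $X = U(\sigma,N)$. By Lemma \ref{lemma: non-simplicial}, at least one of the following holds: $\ker(\beta^1_X) \neq 0$, $\coker(\beta^1_X)\neq 0$, or $\coker(\beta^2_X)\neq 0$. We treat each case separately and check that it yields a failure of $K_2$-regularity.

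If $\ker(\beta^1_X)\neq 0$, then Lemma \ref{lemma: ker coker implies not K regular} with $q=1$ shows that $X$ is not $K_{2}$-regular. If $\coker(\beta^2_X)\neq 0$, the same lemma with $q=2$ shows that $X$ is not $K_2$-regular. If $\coker(\beta^1_X)\neq 0$, the lemma with $q=1$ shows that $X$ is not $K_1$-regular. By Bass's definition, $K_2$-regularity requires $K_m$-regularity for all $m\le 2$, so $X$ is not $K_2$-regular in this case either.

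The theorem therefore reduces to bookkeeping. The substantive inputs are already contained in Lemma \ref{lemma: non-simplicial}, that is, the base case in dimension $3$ and the induction. The only point to verify is that this lemma applies without a full-dimensionality hypothesis, and it does: its inductive step strips torus factors using Lemma \ref{lemma: closed lifting}, and for $d\le 2$ the statement is vacuous. I therefore expect no real obstacle beyond recording, in the $\coker(\beta^1_X)$ case, that the definition of $K_n$-regularity is cumulative in $n$.
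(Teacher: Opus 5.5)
Your proposal is correct and matches the paper's proof exactly: the theorem follows by applying Lemma~\ref{lemma: ker coker implies not K regular} to whichever of the three alternatives of Lemma~\ref{lemma: non-simplicial} holds. Your remark that the case $\coker(\beta^1_X)\neq 0$ only gives failure of $K_1$-regularity, which already rules out $K_2$-regularity because the definition is cumulative in $n$, spells out a step the paper leaves implicit.
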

\begin{proof}
    By Lemma \ref{lemma: ker coker implies not K regular}, the result follows immediately from Lemma \ref{lemma: non-simplicial}.
\end{proof}

\begin{remark}
    If one were only to prove that if $\sigma$ is not simplicial then $X$ is not $K_n$ regular for some $n > 2$, the same induction framework can be used, but the base case may be significantly easier, as follows.
    
    If $\sigma$ is non-simplicial and $\dim \sigma = 3$, then $\sigma\dual$ has at least four rays, $m_1, m_2, m_3, m_4$. Thus if $m = m_1 + m_2 + m_3 + m_4$ then $d\chi^{m_1} \wedge d\chi^{m_2} \wedge d\chi^{m_3} \wedge d\chi^{m_4} \in \Omega^4_X(m)$, but $\dim \dani^4_X(m) = \dim \bigwedge^4 V_{\Gamma(m)} = 0$. Therefore $\ker(\beta^4_X) \neq 0$, whence $NK_5(X) \neq 0$.
\end{remark}


\bibliographystyle{abbrv}
\bibliography{biblio}

@article {Bass_NK_groups,
    AUTHOR = {Corti\~nas, G. and Haesemeyer, C. and Walker, Mark E. and
              Weibel, C.},
     TITLE = {Bass' {$NK$} groups and cdh-fibrant {H}ochschild homology},
   JOURNAL = {Invent. Math.},
  FJOURNAL = {Inventiones Mathematicae},
    VOLUME = {181},
      YEAR = {2010},
    NUMBER = {2},
     PAGES = {421--448},
      ISSN = {0020-9910,1432-1297},
   MRCLASS = {19D35 (13D15 16E40 19D55)},
  MRNUMBER = {2657430},
MRREVIEWER = {Barry\ H.\ Dayton},
       DOI = {10.1007/s00222-010-0253-z},
       URL = {https://doi.org/10.1007/s00222-010-0253-z},
}

@article {chww-toric,
    AUTHOR = {Corti\~nas, G. and Haesemeyer, C. and Walker, Mark E. and
              Weibel, C.},
     TITLE = {The {$K$}-theory of toric varieties},
   JOURNAL = {Trans. Amer. Math. Soc.},
  FJOURNAL = {Transactions of the American Mathematical Society},
    VOLUME = {361},
      YEAR = {2009},
    NUMBER = {6},
     PAGES = {3325--3341},
      ISSN = {0002-9947,1088-6850},
   MRCLASS = {19E08 (14C35 14M25 19D25 19D55)},
  MRNUMBER = {2485429},
MRREVIEWER = {Annette\ Huber},
       DOI = {10.1090/S0002-9947-08-04750-8},
       URL = {https://doi.org/10.1090/S0002-9947-08-04750-8},
}

@misc{Shen,
      title={On higher {D}u {B}ois singularities and ${K}$-regularity}, 
      author={Wanchun Shen},
      year={2025},
      eprint={2504.12402},
      archivePrefix={arXiv},
      primaryClass={math.AG},
      url={https://arxiv.org/abs/2504.12402}, 
}

@article {chw,
    AUTHOR = {Corti\~nas, G. and Haesemeyer, C. and Weibel, C.},
     TITLE = {{$K$}-regularity, {$cdh$}-fibrant {H}ochschild homology, and a
              conjecture of {V}orst},
   JOURNAL = {J. Amer. Math. Soc.},
  FJOURNAL = {Journal of the American Mathematical Society},
    VOLUME = {21},
      YEAR = {2008},
    NUMBER = {2},
     PAGES = {547--561},
      ISSN = {0894-0347,1088-6834},
   MRCLASS = {19D35 (13D03 19D55 19E08)},
  MRNUMBER = {2373359},
MRREVIEWER = {Jens\ Hornbostel},
       DOI = {10.1090/S0894-0347-07-00571-1},
       URL = {https://doi.org/10.1090/S0894-0347-07-00571-1},
}

@article {KST21,
    AUTHOR = {Kerz, Moritz and Strunk, Florian and Tamme, Georg},
     TITLE = {Towards {V}orst's conjecture in positive characteristic},
   JOURNAL = {Compos. Math.},
  FJOURNAL = {Compositio Mathematica},
    VOLUME = {157},
      YEAR = {2021},
    NUMBER = {6},
     PAGES = {1143--1171},
      ISSN = {0010-437X,1570-5846},
   MRCLASS = {19D35 (14F42 19D55)},
  MRNUMBER = {4270122},
MRREVIEWER = {Piotr\ Kraso\'n},
       DOI = {10.1112/S0010437X21007120},
       URL = {https://doi.org/10.1112/S0010437X21007120},
}

@article {Vorst1,
    AUTHOR = {Vorst, Ton},
     TITLE = {Localization of the {$K$}-theory of polynomial extensions},
      NOTE = {With an appendix by Wilberd van der Kallen},
   JOURNAL = {Math. Ann.},
  FJOURNAL = {Mathematische Annalen},
    VOLUME = {244},
      YEAR = {1979},
    NUMBER = {1},
     PAGES = {33--53},
      ISSN = {0025-5831,1432-1807},
   MRCLASS = {18F25 (13F20)},
  MRNUMBER = {550060},
MRREVIEWER = {Daniel\ R.\ Grayson},
       DOI = {10.1007/BF01420335},
       URL = {https://doi.org/10.1007/BF01420335},
}

@book {cox_little_schenck,
    AUTHOR = {Cox, David A. and Little, John B. and Schenck, Henry K.},
     TITLE = {Toric varieties},
    SERIES = {Graduate Studies in Mathematics},
    VOLUME = {124},
 PUBLISHER = {American Mathematical Society, Providence, RI},
      YEAR = {2011},
     PAGES = {xxiv+841},
      ISBN = {978-0-8218-4819-7},
   MRCLASS = {14M25 (05A15 05E45 52B12)},
  MRNUMBER = {2810322},
MRREVIEWER = {Ivan\ Arzhantsev},
       DOI = {10.1090/gsm/124},
       URL = {https://doi.org/10.1090/gsm/124},
}

@article{danilov_geometry_1978,
	title = {The geometry of toric varieties.},
	volume = {33},
	issn = {00421316},
	url = {https://search.ebscohost.com/login.aspx?direct=true&amp;AuthType=sso&amp;db=msn&amp;AN=MR495499&amp;site=ehost-live&amp;custid=s2775460},
	number = {2(200)},
	journal = {Akademiya Nauk SSSR i Moskovskoe Matematicheskoe Obshchestvo. Uspekhi Matematicheskikh Nauk},
	author = {Danilov, V. I.},
	year = {1978},
	pages = {85},
}

@book{fulton_introduction_1993,
	series = {Annals of {Mathematics} {Studies}, 131.},
	title = {Introduction to toric varieties.},
	isbn = {0-691-00049-2},
	url = {https://search.ebscohost.com/login.aspx?direct=true&amp;AuthType=sso&amp;db=msn&amp;AN=MR1234037&amp;site=ehost-live&amp;custid=s2775460},
	publisher = {Princeton University Press, Princeton, NJ},
	author = {Fulton, William},
	year = {1993},
}

@book {polytopes,
    AUTHOR = {Ziegler, G\"unter M.},
     TITLE = {Lectures on polytopes},
    SERIES = {Graduate Texts in Mathematics},
    VOLUME = {152},
 PUBLISHER = {Springer-Verlag, New York},
      YEAR = {1995},
     PAGES = {x+370},
      ISBN = {0-387-94365-X},
   MRCLASS = {52Bxx},
  MRNUMBER = {1311028},
MRREVIEWER = {Margaret\ M.\ Bayer},
       DOI = {10.1007/978-1-4613-8431-1},
       URL = {https://doi.org/10.1007/978-1-4613-8431-1},
}

@article {torsion,
    AUTHOR = {Altmann, Klaus},
     TITLE = {Torsion of differentials on toric varieties},
   JOURNAL = {Semigroup Forum},
  FJOURNAL = {Semigroup Forum},
    VOLUME = {53},
      YEAR = {1996},
    NUMBER = {1},
     PAGES = {89--97},
      ISSN = {0037-1912,1432-2137},
   MRCLASS = {14M25 (14F10 20M14)},
  MRNUMBER = {1387811},
MRREVIEWER = {Alicia\ Dickenstein},
       DOI = {10.1007/BF02574124},
       URL = {https://doi.org/10.1007/BF02574124},
}

@article{adiprasito2024triangulationscommonstellarsubdivision,
  title={All triangulations have a common stellar subdivision},
  author={Adiprasito, Karim A and Pak, Igor},
  journal={Inventiones mathematicae},
  pages={1--21},
  year={2026},
  publisher={Springer}
}

\end{document}